\DeclareMathOperator*{\argmax}{arg\,max}
\definecolor{DB}{rgb}{0,0.07,0.7}
\newcommand{\E}{\mathbb{E}} 
\newcommand{\N}{\mathbb{N}}
\renewcommand{\P}{\mathbb{P}} 
\newcommand{\Z}{\mathbb{Z}}
\newcommand{\V}{\mathcal{V}}
\newcommand{\1}{\mathbbm{1}}
\newcommand{\given}{\,|\,}
\newcommand{\sft}{\mathscr{T}}
\newcommand{\ML}{{\textup{ML}}}
\newcommand{\XX}[2]{\mathcal{X}_{#1}^{#2}(X_1^n)}
\def\F{\mathcal{F}}
\def\ph{\hat{p}}
\def\x{\xi}
\def\X{\chi}
\def\ph{\hat{p}}
\def\C{\mathcal{C}}
\newtheorem{theorem}[equation]{Theorem}
\newtheorem{lemma}[equation]{Lemma}
\newtheorem{prop}[equation]{Proposition}
\newtheorem{assumption}{Assumption}
\numberwithin{equation}{section}
\newtheorem{rem}[equation]{Remark}
\theoremstyle{definition}
\newtheorem{definition}[equation]{Definition}
\title{Context Tree Selection: A Unifying View}
\author[rvt]{A.~Garivier\corref{cor1}} 
\ead{aurelien.garivier@telecom-paristech.fr}
\author[focal]{F.~Leonardi} 
\ead{florencia@usp.br}
\address[rvt]{aurelien.garivier@telecom-paristech.fr, LTCI, CNRS, Telecom ParisTech}
\address[focal]{florencia@usp.br, Instituto de Matem\'atica e Estat\'\i stica, Universidade de S\~ao Paulo}
\begin{document}
\begin{abstract}

Context tree models have been introduced by Rissanen in \cite{rissanen1983} as a parsimonious generalization of Markov models. Since then, they have been widely used in applied probability and statistics.
The present paper investigates non-asymptotic properties of two popular procedures of context tree estimation: Rissanen's algorithm Context and penalized maximum likelihood. First showing how they are related, we prove finite horizon bounds for the probability of over- and under-estimation. 
Concerning over-estimation, no boundedness or loss-of-memory conditions are required: the proof relies on new deviation inequalities for empirical probabilities of independent interest. The under-estimation properties rely on classical hypotheses for processes of infinite memory. These results improve on and generalize the bounds obtained in \cite{duarte2006,galves2006,galves2008,leonardi2009}, refining asymptotic results of~\cite{buhlmann1999,csiszar2006}.

\end{abstract}

\begin{keyword}
algorithm Context \sep penalized maximum likelihood \sep model selection \sep variable length Markov chains \sep Bayesian information criterion \sep deviation inequalities  
\end{keyword}
\maketitle

\section{Introduction}
Context tree models (CTM), first introduced by Jorma Rissanen in \cite{rissanen1983} as efficient tools in Information Theory, have been successfully studied and used since then in many fields of Probability and Statistics, including Bioinformatics \cite{bejerano2001a,busch2009}, Universal Coding \cite{willems1995}, Mathematical Statistics \cite{buhlmann1999} or Linguistics \cite{galves2009}.
Sometimes also called Variable Length Markov Chain (VLMC), a context tree process is informally defined as a  Markov chain whose memory length depends on past symbols. 
This property  makes it possible to represent the set of memory sequences as a tree, called the \emph{context tree} of the process.

A remarkable tradeoff between expressivity and simplicity explains this success: no more difficult to handle than Markov chains, they appear to be much more flexible and parsimonious, including memory only where necessary.
Not only do they provide more efficient models for fitting the data: it appears also that, in many applications, the shape of the context tree has a natural and informative interpretation. In Bioinformatics, the contexts trees of a sample have been useful to test the relevance of protein families databases \cite{busch2009} and in Linguistics, tree estimation highlights structural discrepancies between Brazilian and European Portuguese~\cite{galves2009}.

Of course, practical use of CTM requires the possibility of constructing efficient estimators of the model $T_0$ generating the data. It could be feared that, as a counterpart of the model multiplicity, increased difficulty would be encountered in model selection. Actually, this is not the case, and soon several procedures have been proposed and proved to be consistent. 
Roughly speaking, two families of context tree estimators are available. The first family, derived  from the so-called algorithm  \emph{Context} introduced by Rissanen in \cite{rissanen1983}, is based on the idea of \emph{tree pruning}. They are somewhat reminiscent of the CART \cite{breiman:al:84:cart} pruning procedures: a measure of discrepancy between a node's children determines whether they have to be removed from the tree or not.
The second family of estimators are based on a classical approach of mathematical statistics: \emph{Penalized Maximum Likelihood} (PML). For each possible model, a criterion is computed which balances the quality of fit and the complexity of the model. In the framework of Information Theory, these procedures can be interpreted as derivations of the \emph{Minimum Description Length} principle~\cite{barron1998}.

In the case of bounded memory processes, the problem of consistent estimation is clear: an estimator $\hat{T}$ is strongly consistent if it is  equal to $T_0$ eventually almost surely as the sample size grows to infinity. As soon as 1983, Rissanen proved consistency results for the algorithm  Context  in this case.
But later, the possibility of handling infinite memory processes was also addressed. In \cite{csiszar2006}, an estimator $\hat{T}$ is called \emph{strongly consistent} if for every positive integer $K$, its truncation $\hat{T}_{|K}$ at level $K$ is equal to the truncation $T_0{|_K}$ of $T_0$  eventually almost surely. With this definition, PML estimators are shown to be strongly consistent if the penalties are appropriately chosen and if the maximization is restricted to a proper set of models. This last restriction was proven to be unnecessary in the finite memory case~\cite{garivier2006}.

More recently, the problem of deriving \emph{non-asymptotic} bounds for the probability of incorrect estimation was considered. In \cite{galves2006}, non-universal inequalities were  derived for a version of the algorithm Context in the case of finite context trees. These results were generalized to the case of infinite trees in \cite{galves2008}, and to PML estimators in \cite{leonardi2009}. Using recent advances in weak dependence theory, all these results strongly rely on mixing hypotheses of the process.

For all these results, a distinction has to be made between two potential errors: under- and over-estimation. A context of $T_0$ is said to be \emph{under-estimated} if one of its proper suffixes appears in the estimated tree $\hat{T}$,  whereas it is called \emph{over-estimated} if it appears as an internal node of $\hat{T}$. Over- and under-estimation appear to be of different natures: while under-estimation is eventually avoided by the existence of a strictly positive distance between a process and all processes with strictly smaller context trees, controlling over-estimation requires bounds on the fluctuations of empirical processes.

In this article, we present a unified analysis of the two families of context tree estimators. We contribute to a completely non-asymptotic analysis: we show that for appropriate parameters and measure of discrepancy, the PML estimator is always smaller than the estimator given by the algorithm Context. To our knowledge, this is the first result  comparing this two context tree selection methods. 

Without restrictions on the (possibly infinite) context tree $T_0$, we prove that both methods provide estimators that are with high probability sub-trees of $T_0$ (i.e., a node that is not in $T_0$ does not appear in $\hat{T}$). These bounds are more precise and do not require the conditions assumed in \cite{galves2006,galves2008,leonardi2009}. for this purpose, we derive \emph{``self-normalized''} non-asymptotic deviation inequalities, using martingale techniques inspired from proofs of the Law of the Iterated Logarithm \cite{neveu72,csiszar2002}. 
These inequalities prove interesting in other fields, as for instance in reinforcement learning~\cite{garivier2008,filippiCappeGarivier10KLUCRL}.
On the other hand, we derive upper bounds on the probability of under-estimation by assuming classical mixing conditions on the process generating the sample: with high probability, $\hat{T}$ contains every node of $T_0$ at moderate
height.  This result is based on exponential inequalities derived for a wider class of processes than in \cite{galves2006,galves2008,leonardi2009}.  

Our upper bounds on the probability of over- and under-estimation imply strong consistency of the PML estimators for a larger class of penalizing functions than in \cite{leonardi2009}. Similarly, in the case of the algorithm Context the strong consistency can also be derived for suitable threshold parameters, generalizing the convergence in probability for this estimator obtained previously in \cite{duarte2006}. 

The paper is organized as follows. In Section \ref{sec:notations} we set notation and definitions, we describe in detail the algorithms and we state our main results.
The proof of these results is given in Section \ref{sec:proofs}.
In Section \ref{sec:ccl} we briefly discuss our results. Appendix A contains the statement and proof of the self-normalized deviation inequalities and  Appendix B is devoted to the presentation of exponential inequalities for weak dependent processes.

\section{Notations and results}
\label{sec:notations}
In what follows, $A$ is a finite alphabet; its size is denoted by $|A|$.
$A^j$ denotes the set of all sequences of length $j$ over $A$, in particular $A^0$ has only one element, the empty sequence. 
 We denote by $A^* = \bigcup_{k\geq 0} A^k$ the set of all finite sequences on alphabet $A$
and $A^\infty$ will denote the set of all semi-infinite sequences $v = (\dotsc, v_{-2},v_{-1})$ of symbols in $A$. 
The length of the sequence $w\in A^*$ is $|w|$. 
For $1\leq i\leq j\leq |w|$, we denote $w_i^j = (w_i,\dots, w_j)\in A^{j-i+1}$ and $v_{-\infty}^{-1}$ denotes the
semi-infinite sequence $(\dotsc, v_{-2},v_{-1}) \in A^\infty$. 
Given  $v\in A^*\cup A^\infty$ and  $w\in A^*$, we denote by $vw$ the sequence obtained by
concatenating the two sequences $v$ and $w$.  
We say that the sequence $s\in A^*$ is a \emph{suffix} of the sequence $w\in A^*\cup A^\infty$ if
there exists a sequence $u\in A^*\cup A^\infty$ such that $w = us$.
In this case we write 
$w \succeq s$ or $s\preceq w$. When $|u|\geq 1$ we say that $s$ is a \emph{proper} suffix of $w$ and we write 
$ w \succ s$ or $s\prec w$.  

A set  $T\subset A^*\cup A^\infty$ is a \emph{tree} if no sequence
$s \in T$ is a proper suffix of another sequence $w \in T$.
The \emph{height} of the tree $ T$ is defined as
\begin{equation*}
h( T) = \sup\{|w| : w\in T\}. 
\end{equation*}
If $h( T)<+\infty$ 
we say that $ T$ is \emph{bounded}
and we denote by $|T|$ the cardinality of $T$. 
If $h(T)=+\infty$ 
we say that  $ T$ is
\emph{unbounded}. 
The elements of $T$ are also called the \emph{leaves} of $T$. An \emph{internal node} of $T$ is a proper suffix of a leaf.
For any sequence $w\in A^*\cup A^\infty$ and for any tree $T$,
we define the tree $T_w$ as the set of leaves in $T$ which have $w$
as a suffix, that is
\[
T_w=\{u\in T\colon u \succeq w\}.
\]
Given a tree $ T$ and an integer $K$ we will denote by $ T|_K$ the tree $ T$ \emph{truncated} to level $K$, that is
 \begin{equation*}
  T|_K = \{w \in  T\colon |w| \le K\} \cup \{ w\in A^K\colon  w \prec u \text{ for some } u\in T\}.
 \end{equation*}
Given two trees $T_1$ and $T_2$ we say that $T_1$ is \emph{included} in $T_2$ (denoted by $T_1 \preceq T_2$ or $T_2 \succeq T_1$) if for any sequence $w\in T_1$ there exists a sequence $u\in T_2$ such that $w \preceq u$; in other words, all leaves of $T_1$ are either leaves or internal nodes of $T_2$.

Consider a stationary ergodic stochastic process $\{X_t: t\in\Z\}$
over $A$. Given a sequence $w\in A^*$ we denote by 
\begin{equation*}
 p(w) \,=\,  \P(X_1^{|w|} = w)
\end{equation*}
the stationary probability of the cylinder defined by the sequence $w$.
If $p(w) > 0$ we write
\begin{equation*}
p(a|w) \,=\, \P ( X_0 =a \given X_{-|w|}^{-1}=w)\,.
\end{equation*}

\begin{definition}\label{context}
A sequence $w\in A^*$ is a \emph{finite context} for the process $\{X_t\colon t\in\Z\}$ if it satisfies 
\begin{enumerate}
\item $p(w)>0$;
\item for any sequence $v\in A^*$ such that $p(v)>0$ and $v\succeq w$,
\begin{equation*}
\P ( X_0 =a \given X_{-|v|}^{-1}=v) \,=\, p(a|w),
\quad\text{for all } a\in A;
\end{equation*}
\item no proper suffix of $w$ satisfies 1. and 2.
\end{enumerate}
An \emph{infinite context} is a semi-infinite
sequence $w_{-\infty}^{-1}\in A^\infty$ such that any of its finite suffixes $w_{-j}^{-1}$, $j=1,2,\dotsc$  is a context. In what follows the term \emph{context}
will refer to a finite or infinite context.
\end{definition}

It can be seen that the set of all contexts of the process $\{X_t\colon t\in\Z\}$ is a tree. 
This is called the \emph{context tree} of the process. 
For example, the context tree of an i.i.d. process is $A^0$ and the context tree of a generic Markov chain of order $1$ is $A^1 = A$.
In what follows, we will denote by $T_0$ the context tree of the process $\{X_t\colon t\in\Z\}$.

Let $d\leq n$ be positive integers. 
Let  $X_{-d+1},\dots, X_0, X_{1}, \dots X_n$ be a sequence distributed according to  $\P$.
For any sequence $w\in A^*$ and any symbol $a\in A$ we denote by $N_n(w, a)$ the
number of occurrences of symbol $a$ in $X_1^n$ that are preceded by an occurrence of $w$,
 that is:
\begin{equation} \label{eq:Nn}
N_n(w,a)=\sum_{t=1}^{n}\1\{X_{t-|w|}^{t-1}=w,X_t = a\}. 
\end{equation}
The sum $\sum_{a \in A}N_n(w,a)$ is denoted by $N_n(w)$. 

We will denote by $\V_n$ the set of all sequences $w\in A^*$ that appear at least once in the sample, that is
\[
\V_n = \{w\in A^*\colon N_n(w)\geq 1\}\,.
\]
\begin{definition}\label{accept}
We will say that a tree $T\subset \V_n$ is \emph{acceptable} if it satisfies the following 
conditions: 
\begin{enumerate}
 \item $h(T)\leq d$; \text{ and }
 \item every sequence $w\in A^*$ such that $N_n(w)\geq 1$ belongs to $T$ or has a proper suffix that belongs to  $T$.
 \end{enumerate}
 \end{definition}
Then, our set of candidate trees, denoted by $\sft_n$, will be the set of all acceptable trees. 
Our goal is to select a tree $T\in\sft_n$ as close as possible to $T_0$, in some sense that will be formally given below.
Note that $d$ may depend on $n$, so that the set of candidate trees is allowed to grow with the sample size. The symbols $X_{-d+1},\dots, X_0$ are only observed to ensure that, for every candidate tree $T$, the context of $X_i$ in $T$ is well defined, for every $i=1,\dotsc, n$.
Alternatively, if $X_{-d+1},\dots, X_0$ were not assumed observed, similar results would be obtained by using quasi-maximum likelihood estimators~\cite{galves:garivier:gassiat:2010}. 
Given a tree $ T\subset \cup_{j=1}^d A^j$, the maximum likelihood of the sequence
$X_1,\dotsc,X_n$ is given by
\begin{equation}\label{pmle}
\hat\P_{\ML, T}(X_1^n) =  \prod_{w\in  T}\prod_{a\in A} 
\hat p_n(a|w)^{N_n(w,a)},
\end{equation}
where the empirical probabilities $\hat p_n(a|w)$ are 
\begin{equation}\label{tp}
\hat p_n(a|w) = \frac{N_n(w,a)}{N_n(w)}
\end{equation}
if $N_n(w)>0$ and $\hat p_n(a|w)= 1/|A|$ otherwise. 
For any sequence $w\in A^*$ we define
\begin{equation*}
\hat\P_{\ML,w}(X_1^n) =  \prod_{a\in A} \hat p_n(a|w)^{N_n(w,a)}.
\end{equation*}
Hence, we have 
\begin{equation*}
\hat\P_{\ML, T}(X_1^n) =  \prod_{w\in  T}   \hat\P_{\ML,w}(x_1^n).
\end{equation*}
In order to measure discrepancy between two probability measures over $A$ we use  the \emph{K\"ullback-Leibler divergence}, defined for two probability measures $P$ and $Q$ on $A$ by
$$D(P;Q) = \sum_{a\in A} P(a)\log\frac{P(a)}{Q(a)}$$
where, by convention, $P(a) \log\frac{P(a)}{Q(a)}=0$ if $P(a)=0$ and $P(a) \log\frac{P(a)}{Q(a)}=+\infty$ if $P(a) > Q(a)=0$. 

\subsection{The algorithm Context}

The algorithm Context introduced by J. Rissanen in \cite{rissanen1983} computes, for each node of a given tree, a discrepancy measure between the transition probability associated to this context and the corresponding transition probabilities of the nodes obtained by concatenating a single symbol to the context. Beginning with the largest leaves of a candidate tree,  if the discrepancy measure is greater than a given threshold, the contexts are maintained in the tree; otherwise, they are pruned.  The procedure continues until no more pruning of the tree can be performed.

For all sequences  $w\in \V_n$ let 
\[\Delta_n(w) = \sum_{b\colon  bw\in \V_n} N_{n}(bw)D\left( \hat{p}_n(\cdot |bw) ;  \hat{p}_n(\cdot |w) \right).\]

\begin{rem}
We use  here the original choice of divergence $\Delta_n(w)$ proposed  by J. Rissanen in \cite{rissanen1983}, but other  possibilities have been proposed in the literature (see for instance \cite{buhlmann1999,galves2006}).
\end{rem}

We will denote the threshold used in algorithm Context on samples of length $n$ by $\delta_n$, where $(\delta_n)_{n\in\N}$ is a sequence of positive real numbers such that $\delta_n\to+\infty$ and $\delta_n/n \to 0$ when $n\to+\infty$. 
For a sequence $X_{1}^n$, let  $\C_w(X_1^n)\in\{0,1\}$ be 
an indicator function  defined for all  $w\in \V_n$  by the following induction:
\begin{equation}\label{funct_cont}
\C_w(X_1^n) = 
\begin{cases}
 0, & \hbox{ if } N_{n}(w) \leq 1 \text{ or } |w|\geq d,\\
  \max\{ \1\{ \Delta_n(w) \geq \delta_n \} , \max_{b\in A} \C_{bw}(X_1^n)\}, &\hbox{ if } N_{n}(w) > 1\text{ and }  |w|< d.
\end{cases}
\end{equation}
With these definitions, the context tree estimator $\hat{T}_C(X_1^n)$ is the set given by 
\begin{equation}\label{alg_cont}
\hat{T}_C(X_1^n) = \{w\in \V_n\colon \C_w(X_1^n)=0 \text{ and } \C_u(X_1^n)=1 \text{ for all } u\prec w \}
\end{equation}

\subsection{The penalized maximum likelihood criterion}

The penalized maximum likelihood criterion for the sequence $X_1^n$ is defined by
\begin{equation}\label{tpml}
\hat{T}_{PML}(X_1^n) =  \argmax_{T\in\sft_n} \left\{\log  \hat\P_{\ML, T}(X_1^n) - |T|f(n) \right\},
\end{equation}
where $f(n)$ is some positive function such that $f(n)\to+\infty$ and $f(n)/n\to 0$ when $n\to\infty$.

This class of context tree estimators was first considered by Csisz\'ar and Talata in \cite{csiszar2006}, who introduced the Bayesian Information Criterion (BIC) for context trees and proved its consistency. The BIC leads to the choice of the penalty function  $f(n) = (|A|-1)\log(n)/2$.  
It may first appear practically impossible to compute $\hat{T}_{PML}(X_1^n)$, because the maximization in (\ref{tpml}) must be performed over the set of all candidate trees. Fortunately, Csisz\'ar and Talata showed in their article \cite{csiszar2006} how to adapt the Context Tree Maximizing (CTM) method \cite{willems1995} in order to obtain a simple and efficient algorithm computing $\hat{T}_{PML}(X_1^n)$. 
As the representation of the estimator $\hat{T}_{PML}(X_1^n)$ given by this algorithm  is important for the proof of our results, we briefly present it here. Define recursively, for any  $w\in \V_n$, with $|w|<d$, the value
\begin{equation}\label{value}
  V_w(X_1^n) =    \max\Big\{e^{-f(n)} \hat\P_{\ML,w}(X_1^n),\; \prod_{b\in A\colon bw \in \V_n}V_{bw}(X_1^n)\Big\}
   \end{equation}
and the indicator
\begin{equation}\label{ind}
  \XX{w}{}=   \1\Big\{ \prod_{b\in A\colon bw \in \V_n}V_{bw}(X_1^n) > e^{-f(n)} \hat\P_{\ML,w}(X_1^n)\Big\}\,.
\end{equation}
By convention, if
 $\{b\in A\colon bw \in \V_n\} = \emptyset$ or if
 $|w|=d$ then $ V_w(X_1^n) = e^{-f(n)} \hat\P_{\ML,w}(X_1^n)$ and $  \XX{w}{}=0$. As shown in \cite{csiszar2006}, it holds that

\begin{equation}\label{alg_pml}
\hat{T}_{PML}(X_1^n) = \{w\in \V_n\colon \XX{w}{}=0 \text{ and } \XX{u}{}=1 \text{ for all } u\prec w \}\;.
\end{equation}

\subsection{Results}

In this subsection we present the main results of this article.
First, we show that the empirical tree given by the algorithm Context is always included in the tree given by the penalized 
maximum likelihood estimator, if the threshold $\delta_n$ is smaller than the penalization  function $f(n)$. 

\begin{prop}
\label{prop:comparaison}
For any $n\geq 1$ and all sequences $X_1^n$, if $\delta_n\leq f(n)$ then 
\[\hat{T}_{PML}(X_1^n) \;\preceq \;\hat{T}_{C}(X_1^n)\,.\]
\end{prop}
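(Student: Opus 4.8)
The plan is to reduce the tree inclusion to a pointwise comparison of the two indicator functions that drive the recursions (\ref{funct_cont}) and (\ref{ind}), namely to prove that $\XX{w}{} \le \C_w(X_1^n)$ for every $w \in \V_n$. Granting this, the inclusion $\hat{T}_{PML}(X_1^n) \preceq \hat{T}_C(X_1^n)$ follows from the common structure of (\ref{alg_cont}) and (\ref{alg_pml}): if $w$ is a leaf of $\hat{T}_{PML}(X_1^n)$, then $\XX{u}{}=1$ for every $u\prec w$, hence $\C_u=1$ for every $u\prec w$; either $\C_w=0$, so $w$ is itself a leaf of $\hat{T}_C(X_1^n)$, or $\C_w=1$, in which case the monotonicity of $\C$ along suffixes (the term $\max_{b\in A}\C_{bw}$ in (\ref{funct_cont}) gives $\C_v=1\Rightarrow\C_u=1$ for all $u\prec v$) together with a descent from $w$ through children of indicator $1$ until the first $0$ exhibits a leaf $v\in\hat{T}_C(X_1^n)$ with $w\prec v$. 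In both cases $w\preceq u$ for some $u\in\hat{T}_C(X_1^n)$.

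The bridge between the likelihood world and the divergence world is the algebraic identity
\begin{equation*}
\Delta_n(w) \;=\; \sum_{b\,:\,bw\in\V_n}\log\hat\P_{\ML,bw}(X_1^n)\;-\;\log\hat\P_{\ML,w}(X_1^n),
\end{equation*}
which I would establish first. It follows by expanding $D(\hat p_n(\cdot|bw);\hat p_n(\cdot|w))$, using $N_n(bw)\hat p_n(a|bw)=N_n(bw,a)$ and the aggregation $\sum_{b:bw\in\V_n}N_n(bw,a)=N_n(w,a)$ — the latter holding for $|w|<d$ precisely because the symbols $X_{-d+1},\dots,X_0$ are observed, so that no occurrence is lost at the left boundary.

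For the pointwise comparison I would argue by downward induction on $|w|$, the case $|w|=d$ being trivial since $\XX{w}{}=0$ by convention. Fix $w$ with $|w|<d$, assume $\XX{bw}{}\le\C_{bw}$ for all children, and suppose $\XX{w}{}=1$. A short separate induction shows that $N_n(w)\le 1$ forces $\XX{w}{}=0$: if $N_n(w)=0$ there are no children and the convention applies, while if $N_n(w)=1$ the unique occurrence propagates down a single chain on which every empirical likelihood equals $1$, so $\VV{v}=e^{-f(n)}$ throughout and the strict inequality in (\ref{ind}) never holds. Hence $N_n(w)>1$ and the second branch of (\ref{funct_cont}) applies. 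If some child has $\XX{bw}{}=1$, the induction hypothesis gives $\C_{bw}=1$ and thus $\C_w=1$. Otherwise every child has $\XX{bw}{}=0$, which by the definition (\ref{value})–(\ref{ind}) forces $\VV{bw}=e^{-f(n)}\hat\P_{\ML,bw}(X_1^n)$; substituting this into the inequality $\prod_{b:bw\in\V_n}\VV{bw}>e^{-f(n)}\hat\P_{\ML,w}(X_1^n)$ defining $\XX{w}{}=1$, taking logarithms, and invoking the identity above yields $\Delta_n(w)>(k-1)f(n)$, where $k=|\{b:bw\in\V_n\}|$.

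The delicate point — and the step I expect to be the main obstacle — is the role of $k$, since for $k=1$ the bound $\Delta_n(w)>0$ is too weak to reach the threshold $\delta_n$. Here one observes that when $w$ has a single child $bw\in\V_n$ one has $N_n(bw,a)=N_n(w,a)$ for all $a$, hence $\hat p_n(\cdot|bw)=\hat p_n(\cdot|w)$ and $\Delta_n(w)=0$, so the strict inequality $\Delta_n(w)>0$ is impossible; thus $\XX{w}{}=1$ forces $k\ge 2$. Then $\Delta_n(w)>(k-1)f(n)\ge f(n)\ge\delta_n$ by the hypothesis $\delta_n\le f(n)$, so $\1\{\Delta_n(w)\ge\delta_n\}=1$ and $\C_w=1$. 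This closes the induction and, with the reduction of the first paragraph, completes the proof. Note that the quantitative hypothesis $\delta_n\le f(n)$ is used exactly once, at this final inequality.
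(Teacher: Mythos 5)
Your proof is correct and follows essentially the same route as the paper's: reduction to the pointwise comparison $\mathcal{X}_{w}(X_1^n)\leq \C_w(X_1^n)$, the identity expressing $\Delta_n(w)$ as $\sum_{b}\log\hat\P_{\ML,bw}(X_1^n)-\log\hat\P_{\ML,w}(X_1^n)$, the observation that $\XX{w}{}=1$ forces at least two children in $\V_n$, and the single use of $\delta_n\leq f(n)$ at the end. The only differences are cosmetic: you organize the argument as a downward induction on $|w|$ where the paper picks a deepest violating node and argues by contradiction, and you make explicit the case $N_n(w)\leq 1$ (showing $\XX{w}{}=0$ there), which the paper handles implicitly through its $k\geq 2$ argument.
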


In the sequel we will assume that the cutoff sequence of the algorithm Context equals the penalization term of the penalized maximum likelihood
estimator, in order to allow a  unified treatment of the two algorithms. That is, we will assume that  $\delta_n = f(n)$ for any $n\geq 1$. 

We now state a new bound on the probability of over-estimation that does not require any mixing hypotheses on the underlying process.
\begin{theorem}
\label{th:overestimation}
For every  $n\geq 1$ it holds that
\begin{equation}\label{factor}
\P\left( \hat{T}(X_1^n) \preceq T_0  \right)\; \geq\; 1-e\left(\delta_n\log(n) +|A|^2\right)n^2\exp\left(-\frac{\delta_n}{|A|^2}\right)\,, 
\end{equation}
where $ \hat{T}(X_1^n)=\hat{T}_{PML}(X_1^n)$ or $\hat{T}(X_1^n)=\hat{T}_{C}(X_1^n)$.
\end{theorem}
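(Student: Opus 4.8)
The plan is to establish the bound for $\hat T_C$ and to obtain it for $\hat T_{PML}$ for free. Since we set $\delta_n=f(n)$, Proposition~\ref{prop:comparaison} yields $\hat T_{PML}(X_1^n)\preceq \hat T_C(X_1^n)$, hence $\{\hat T_C\preceq T_0\}\subseteq\{\hat T_{PML}\preceq T_0\}$, and it suffices to lower-bound $\P(\hat T_C(X_1^n)\preceq T_0)$. Now $\hat T_C\preceq T_0$ fails exactly when some context $s\in T_0\cap\V_n$ is promoted to an internal node of $\hat T_C$, i.e. $\C_s(X_1^n)=1$. Reading off the recursion~\eqref{funct_cont}, this forces a node $w\in\V_n$ with $s\preceq w$, $|w|<d$, $N_n(w)>1$ and $\Delta_n(w)\geq\delta_n$. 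The decisive structural remark is that, $s$ being a context of $T_0$ and a suffix of $w$, Definition~\ref{context} forces all conditional laws along this branch to coincide: writing $q\eqdef p(\cdot\,|\,s)$, we have $p(\cdot\,|\,w)=p(\cdot\,|\,bw)=q$ for every child $bw\in\V_n$. Hence $\Delta_n(w)$ is a pure fluctuation of empirical transition probabilities around the single law $q$.

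The second step rewrites $\Delta_n(w)$ so that $q$ appears explicitly. Expanding the divergences as log-likelihood ratios and using $\sum_b N_n(bw,a)=N_n(w,a)$ (valid because $|w|<d$ and the extra symbols $X_{-d+1},\dots,X_0$ are observed) gives the exact identity
\[
\Delta_n(w)=\sum_{b\colon bw\in\V_n}N_n(bw)\,D\big(\hat p_n(\cdot|bw);q\big)-N_n(w)\,D\big(\hat p_n(\cdot|w);q\big)\leq\sum_{b\colon bw\in\V_n}N_n(bw)\,D\big(\hat p_n(\cdot|bw);q\big),
\]
the last inequality holding because the subtracted divergence is nonnegative. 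A double pigeonhole then isolates a single scalar deviation: some child satisfies $N_n(bw)\,D(\hat p_n(\cdot|bw);q)\geq\delta_n/|A|$, and since $D(\hat p_n(\cdot|bw);q)\leq\sum_a d\big(\hat p_n(a|bw);q(a)\big)$ for the binary K\"ullback-Leibler divergence $d$ (an elementary consequence of $\log x\geq 1-1/x$), some symbol $a$ satisfies $N_n(bw)\,d\big(\hat p_n(a|bw);q(a)\big)\geq\delta_n/|A|^2$. Thus $\{\Delta_n(w)\geq\delta_n\}$ is contained in a union, over the at most $|A|^2$ cells $(b,a)$, of scalar events measuring how far the empirical frequency $\hat p_n(a|bw)$ lies from its true value $q(a)$.

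The heart of the proof is a deviation bound for these scalar events that is uniform in the random count $N_n(bw)$, which is exactly what the self-normalized inequalities of Appendix~A provide. Conditionally on its past, each visit to $bw$ is followed by symbol $a$ with probability $q(a)$, so the values of $\1\{X_t=a\}$ taken along successive visits to $bw$ form a martingale difference sequence; controlling the self-normalized binary divergence by a Law-of-the-Iterated-Logarithm-type maximal inequality yields a per-cell bound of order $\exp(-\delta_n/|A|^2)$ together with a polynomial correction $\delta_n\log(n)+|A|^2$ (the $\log n$ from peeling over the range $2\le N_n\le n$, the $|A|^2$ from the number of cells). Finally I would sum the resulting per-node bound over all candidate nodes $w\in\V_n$; since the data contain at most of order $n^2$ distinct factors, this contributes the factor $n^2$ and gives~\eqref{factor}.

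The main obstacle is this last, self-normalized step. Everything preceding it is either a soft comparison (the reduction to $\hat T_C$ and the characterization of over-estimation) or exact algebra (the identity for $\Delta_n(w)$ and the two pigeonhole splits). What makes the bound genuinely non-asymptotic and free of any mixing or boundedness assumption is that the exponential control must hold simultaneously for every value of the random normalization $N_n(bw)$; achieving the clean rate $\delta_n/|A|^2$ at the cost of only a polynomial prefactor, rather than the much larger loss a naive union bound over the possible counts would incur, is the delicate point for which the martingale technique is indispensable.
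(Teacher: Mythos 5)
Your proposal is correct and takes essentially the same route as the paper's own proof: reduction to $\hat{T}_C$ via Proposition~\ref{prop:comparaison}, the reduction of over-estimation to events $\{\Delta_n(us)\ge\delta_n\}$ with all conditional laws along the branch equal, the bound $\Delta_n(w)\le\sum_{b}N_n(bw)D(\hat p_n(\cdot|bw);p(\cdot|bw))$ (your exact identity is precisely the paper's maximum-likelihood inequality, since the dropped term is $N_n(w)D(\hat p_n(\cdot|w);p(\cdot|w))$), the union bounds over $b$ and $a$ with the binary-divergence comparison of Lemma~\ref{lem:devent}, the self-normalized martingale/peeling inequality of Appendix~A (Theorem~\ref{th:borneD}), and the final count of at most order $n^2$ distinct observed strings. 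There is no substantive difference in decomposition, key lemmas, or constants.
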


\begin{rem}
Theorem~\ref{th:overestimation} is  proven without assuming any bound on the height of the hypothetical trees. That is, the result remains valid even if $d=-\infty$. But if the candidate trees have only a limited number of nodes, possibly depending on $n$  (see, e.g, \cite{rissanen1983,csiszar2006}), a straightforward modification of the proof shows that 
\[
\P\left( \hat{T}(X_1^n) \preceq T_0  \right) \;\geq\; 1-2e\left(\delta_n\log(n) +|A|^2\right)k(n)\exp\left(-\frac{\delta_n}{|A|^2}\right)\,,
\]
where $k(n)$ is the maximal number of nodes of a candidate tree.
In particular, if the height of the trees is smaller than a function $d(n)$ (possibly constant) then $k(n) = |A|^{d(n)}$.
\end{rem}

The problem of under-estimation in context tree models is very different, and requires additional hypotheses on the process 
$\{X_t\colon t\in\Z\}$. 
For any $w\in A^*$ with $p(w)>0$ define the coefficient
\[
\beta(w,r) = \max_{u\in A^r}\max_{a\in A}\, \{|p(a|w)-p(a|uw)|\}\,.
\]
The \emph{continuity rate} of the process $\{X_t\colon t\in\Z\}$ is the sequence $\{\beta_k\}_{k\in \N}$ where  
\[
\beta_k = \max_{w\in A^k}\sup_{r\geq 1}\, \{\beta(w,r)\}\,.
\]
Define also the \emph{non-nullness}  coefficient
\begin{equation}\label{alpha0}
\alpha_0 :=  \sum_{a\in A} \inf_{w\in T_0} \{\, p(a|w)\}\,.
\end{equation}
Our underestimation error bounds will rely on the following assumption.
\begin{assumption}\label{ass:phi}
The process $\{X_t\colon t\in\Z\}$ satisfies the following conditions
\begin{enumerate}
\item $\alpha_0>0$ (weakly non-nullness) and
\item $\beta := \sum_{k\in\N} \beta_k \;< \;+\infty$\, (summable continuity rate). 
\end{enumerate}
\end{assumption}

These are classical hypotheses for processes of infinite memory, which are also referred to as \emph{chains of type A}, 
see for instance \cite{fernandez2002} and references therein. 
 
To establish upper bounds for the probability of under-estimation we will consider
the truncated tree $T_0|_K$, for any given constant $K\in\N$. Note that in the case $T_0$ is a finite tree, 
$T_0|_K$ coincides with $T_0$ for a sufficiently large constant $K$. 
The bounds are stated in the following theorem.
 
\begin{theorem}
\label{th:underestimation}
Assume the process $\{X_t\colon t\in\Z\}$ satisfies Assumption~\ref{ass:phi}. Let $K\in\N$ and let $d$ be such that 
\begin{equation}\label{eq:hypcont}
\min_{w\prec u\in T_0|_K} \max_{r\leq d-|w|}\; \{ \beta(w,r) \}\geq \epsilon >0\,. 
\end{equation}
Then, there exists $n_0\in\N$ such that for any $n\geq n_0$ it holds that
\begin{equation}\label{upp}
\P\left(  T_0|_K \preceq \;  \hat{T}(X_1^n)|_K  \right)\; \geq \;1\,-\,
3e^{\alpha_0/32e^2|A|^2(|A|\beta+2\alpha_0)} |A|^{2+K} \exp\Bigl[\frac{-n\epsilon^2\,[p_{\min}^d-\frac{8|A|d\,f(n)}{\epsilon^2n}]^2}
{16(d+1)}\Bigr]\,,
\end{equation}
where $\hat{T}(X_1^n)=\hat{T}_{PML}(X_1^n)$ or $\hat{T}(X_1^n)=\hat{T}_{C}(X_1^n)$ and  $p_{\min}=\min_{a\in A,w\in A^d}\{p(a|w)\colon p(a|w)>0\}$ .
\end{theorem}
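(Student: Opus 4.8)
The plan is to reduce the statement to the PML estimator and then to a union bound over the internal nodes of $T_0|_K$. By Proposition~\ref{prop:comparaison} applied with $\delta_n=f(n)$ we have $\hat{T}_{PML}(X_1^n)\preceq\hat{T}_{C}(X_1^n)$, and truncation preserves inclusion, so $\hat{T}_{PML}(X_1^n)|_K\preceq\hat{T}_{C}(X_1^n)|_K$; consequently $\{T_0|_K\not\preceq\hat{T}_{C}(X_1^n)|_K\}\subseteq\{T_0|_K\not\preceq\hat{T}_{PML}(X_1^n)|_K\}$ and it suffices to prove the bound for $\hat{T}=\hat{T}_{PML}$, the estimator more prone to under-estimation. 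From the representation \eqref{alg_pml}, a node $w$ is internal in $\hat{T}_{PML}(X_1^n)$ as soon as $\XX{u}{}=1$ for every suffix $u\preceq w$; since the set of proper suffixes of leaves of $T_0|_K$ is closed under taking suffixes, the event $T_0|_K\preceq\hat{T}_{PML}(X_1^n)|_K$ holds whenever $\XX{w}{}=1$ for every such internal node $w$. Hence $\P(T_0|_K\not\preceq\hat{T}_{PML}(X_1^n)|_K)\le\sum_{w\prec u\in T_0|_K}\P(\XX{w}{}=0)$, a sum of at most $|A|^{1+K}$ terms.

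Next I would produce a sufficient condition for $\XX{w}{}=1$ that exploits the gap in \eqref{eq:hypcont}. For an internal node $w$, choose $r\le d-|w|$, $z\in A^{r}$ and $a^\ast\in A$ with $|p(a^\ast|w)-p(a^\ast|zw)|\ge\epsilon$, and set $u^\ast=zw$, so $|u^\ast|\le d$. Let $S_w$ be the set of leaves of the minimal complete subtree rooted at $w$ having $u^\ast$ among its leaves; then $|S_w|-1=r(|A|-1)\le|A|d$. Iterating \eqref{value} gives $\prod_{b}\VV{bw}\ge e^{-|S_w|f(n)}\prod_{s\in S_w}\hat\P_{\ML,s}(X_1^n)$, and because the observed symbols $X_{-d+1}^0$ make the leaves of $S_w$ partition every occurrence of $w$, one has the exact identity $\sum_{s\in S_w}\log\hat\P_{\ML,s}(X_1^n)-\log\hat\P_{\ML,w}(X_1^n)=\sum_{s\in S_w}N_n(s)\,D(\hat p_n(\cdot|s);\hat p_n(\cdot|w))\ge 0$. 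Comparing with \eqref{ind}, the indicator $\XX{w}{}$ equals $1$ whenever this log-likelihood gain exceeds $(|S_w|-1)f(n)$; keeping only the term $s=u^\ast$ and applying Pinsker's inequality, $\XX{w}{}=1$ is implied by $N_n(u^\ast)(\hat p_n(a^\ast|u^\ast)-\hat p_n(a^\ast|w))^2>2|A|d\,f(n)$. Therefore $\P(\XX{w}{}=0)\le\P(N_n(u^\ast)(\hat p_n(a^\ast|u^\ast)-\hat p_n(a^\ast|w))^2\le 2|A|d\,f(n))$.

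The final step is to control this probability by concentration. On the event $|\hat p_n(a^\ast|u^\ast)-\hat p_n(a^\ast|w)|\ge\epsilon/2$, the left-hand side exceeds $2|A|d\,f(n)$ as soon as $N_n(u^\ast)\ge 8|A|d\,f(n)/\epsilon^2$; since $N_n(u^\ast)$ concentrates around $n\,p(u^\ast)\ge n\,p_{\min}^d$, the admissible lower-deviation budget for the count is $n(p_{\min}^d-8|A|d\,f(n)/(\epsilon^2 n))$, which is positive precisely for $n\ge n_0$ and which furnishes the bracket appearing in \eqref{upp}. Both the deviation of the empirical transition probabilities from the true gap $|p(a^\ast|w)-p(a^\ast|u^\ast)|\ge\epsilon$ and the lower deviation of the count $N_n(u^\ast)$ are estimated using Assumption~\ref{ass:phi} and the exponential inequalities for chains of type A established in Appendix B: each occurrence of $u^\ast$ (a word of length $\le d$) enters through a bounded-difference term touching $\le d+1$ consecutive symbols, which produces the factor $1/(16(d+1))$ together with the prefactor constant $3\,e^{\alpha_0/(32e^2|A|^2(|A|\beta+2\alpha_0))}$. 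Summing the resulting exponential bound over the at most $|A|^{1+K}$ internal nodes $w$ and over the choice of $a^\ast$ yields the factor $|A|^{2+K}$ and the announced inequality.

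The main obstacle is this concentration step. In contrast with Theorem~\ref{th:overestimation}, under-estimation genuinely requires the dependence hypotheses of Assumption~\ref{ass:phi}, and one must transfer the gap $|p(a^\ast|w)-p(a^\ast|u^\ast)|\ge\epsilon$ between stationary transition probabilities to a gap between their empirical counterparts while simultaneously keeping $N_n(u^\ast)$ bounded below by order $n\,p_{\min}^d$. Obtaining the explicit constants in \eqref{upp} from the weak-dependence inequalities of Appendix B, and using Pinsker's inequality to sidestep the unboundedness of the K\"ullback--Leibler divergence near the boundary of the simplex, is the delicate part; everything else is a bookkeeping assembly of the two concentration events through the union bound.
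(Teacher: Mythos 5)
Your proposal is correct and takes essentially the same route as the paper's own proof: reduction to the PML estimator via Proposition~\ref{prop:comparaison}, a union bound over the internal nodes $w\prec u\in T_0|_K$ of the events $\{\XX{w}{}=0\}$, the spine-tree construction around $u^\ast=sw$ exploiting hypothesis \eqref{eq:hypcont}, and a two-event split (empirical gap at least $\epsilon/2$ versus $N_n(sw)$ too small) controlled by the Appendix~B inequalities (Lemmas~\ref{lemma:Nn} and~\ref{lemma:leo}). The only differences are cosmetic: you obtain the lower bound on $\prod_b V_{bw}(X_1^n)$ by iterating the recursion \eqref{value} where the paper invokes the Csisz\'ar--Talata representation, and you apply Pinsker's inequality directly to the single-symbol gap where the paper packages that step inside Lemma~\ref{lemma:div}.
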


\begin{rem}
It can be seen that for any $K\in\N$ there is always a value of $d$ such that \eqref{eq:hypcont} holds. This hypothesis can be avoided by letting $d$ increase with the sample size $n$ and by controlling the upper bounds in (\ref{upp}). 
Extensions of Theorem~\ref{th:underestimation} can also be obtained by allowing $K$ to be a function of the sample size $n$. In this case, the rate at which $K$ increases must be controlled together with the rate at which  $\epsilon$ and $p_{\min}$ decrease with the sample size. This leads to a rather technical condition, see for instance \cite{talata2009}.
\end{rem}

Finally, the next theorem states the strong consistency of the estimators $\hat T_C(X_1^n)$ and $\hat T_{PML}(X_1^n)$ for appropriate  threshold parameters and penalizing functions, respectively.

\begin{theorem}\label{strong:cons}
Assume the hypotheses of Theorem~\ref{th:underestimation} are met. Then for any threshold parameter $(\delta_n)_{n\in\N}$ such that
\[
\sum_{n\in\N}\exp \left(-\frac{\delta_n}{|A|^2} + \log(\delta_n\log(n))\right) \;<\;+\infty
\]
we have $\hat T_C(X_1^n)|_K = T_0|_K $ eventually almost surely as $n\to+\infty$. Similarly, if we choose $f(n)=\delta_n$ we have
$\hat T_{PML}(X_1^n)|_K = T_0|_K $ eventually almost surely as $n\to+\infty$.
\end{theorem}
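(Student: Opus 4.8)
The plan is to establish strong consistency through the first Borel--Cantelli lemma: it suffices to prove that
\[
\sum_{n\in\N}\P\bigl(\hat{T}(X_1^n)|_K \neq T_0|_K\bigr) \;<\;+\infty
\]
for $\hat{T}\in\{\hat{T}_C,\hat{T}_{PML}\}$, since then an incorrect truncated tree is produced for only finitely many $n$ almost surely, which is exactly the assertion $\hat{T}(X_1^n)|_K = T_0|_K$ eventually almost surely. To control the summand I would split the error event into an over- and an under-estimation part. Because $\preceq$ is antisymmetric on trees, equality can fail only if one of the two inclusions fails, so
\[
\bigl\{\hat{T}(X_1^n)|_K \neq T_0|_K\bigr\} \;\subseteq\; \bigl\{\hat{T}(X_1^n)|_K \not\preceq T_0|_K\bigr\}\,\cup\,\bigl\{T_0|_K \not\preceq \hat{T}(X_1^n)|_K\bigr\},
\]
and I would bound the two probabilities separately. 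Both estimators are handled at once: with $\delta_n=f(n)$, Proposition~\ref{prop:comparaison} gives $\hat{T}_{PML}\preceq\hat{T}_C$, and Theorems~\ref{th:overestimation} and~\ref{th:underestimation} already apply verbatim to each of them.

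The under-estimation term is the easy one and requires no growth condition on $\delta_n$. I would invoke Theorem~\ref{th:underestimation} directly: since $K$ and $d$ are fixed constants and $f(n)/n\to0$, the bracket $p_{\min}^d-8|A|d\,f(n)/(\epsilon^2n)$ in \eqref{upp} converges to $p_{\min}^d>0$, so for $n\ge n_0$ the bound is at most $C\exp(-cn)$ for constants $c,C>0$ depending only on $K,d,\epsilon,p_{\min},\alpha_0,\beta$ and $|A|$. This is summable, so under Assumption~\ref{ass:phi} and the continuity hypothesis \eqref{eq:hypcont} the under-estimation event $\{T_0|_K\not\preceq\hat{T}|_K\}$ occurs only finitely often almost surely.

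The over-estimation term is where the summability condition on $\delta_n$ is genuinely used, and it is the main obstacle. Applying Theorem~\ref{th:overestimation} as it stands would insert the factor $n^2$ from \eqref{factor}, coming from the crude union bound over all $\sim n^2$ sequences of $\V_n$; this term is not summable under the stated hypothesis. Instead I would use the combinatorial observation that, after truncation at level $K$, over-estimation is harmful only below the finitely many contexts $w\in T_0$ with $|w|<K$: refinement below a context of depth $\ge K$ is washed out by the truncation and does not violate $\hat{T}|_K\preceq T_0|_K$. For each such fixed $w$ the decision to refine is governed by the aggregate quantity $\sum_{v\succeq w}\Delta_n(v)$, a single martingale-type log-likelihood ratio to which the self-normalized deviation inequalities of Appendix~A apply \emph{without} a polynomial union factor. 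This yields, for each of the finitely many relevant contexts, a bound of the order of \eqref{factor} but with the $n^2$ removed, so that
\[
\P\bigl(\hat{T}(X_1^n)|_K \not\preceq T_0|_K\bigr)\;\le\; C'\bigl(\delta_n\log(n)+|A|^2\bigr)\exp\Bigl(-\frac{\delta_n}{|A|^2}\Bigr).
\]
Since $\delta_n\to+\infty$, the second term is dominated by the first, and summing over $n$ is exactly the hypothesis $\sum_n\exp(-\delta_n/|A|^2+\log(\delta_n\log n))<+\infty$; Borel--Cantelli then disposes of over-estimation as well.

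Combining the two Borel--Cantelli conclusions gives $\hat{T}(X_1^n)|_K=T_0|_K$ eventually almost surely for each estimator, which is the claim. The one step needing care is the over-estimation bound: one must both establish the truncation lemma (that $\hat{T}|_K\not\preceq T_0|_K$ forces over-estimation below a context of depth $<K$, so that only a constant number of nodes matter) and carry out the aggregate self-normalized argument that replaces the $n^2$ union of Theorem~\ref{th:overestimation} by a constant, thereby matching the weak summability condition imposed on $\delta_n$.
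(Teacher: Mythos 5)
Your overall skeleton --- Borel--Cantelli applied to the decomposition $\{\hat{T}(X_1^n)|_K \neq T_0|_K\} \subseteq \{\hat{T}(X_1^n)|_K \npreceq T_0|_K\} \cup \{T_0|_K \npreceq \hat{T}(X_1^n)|_K\}$, the exponential (hence summable) under-estimation bound of Theorem~\ref{th:underestimation}, and the transfer between the two estimators via Proposition~\ref{prop:comparaison} with $f(n)=\delta_n$ --- is exactly the paper's proof, which consists of little more than these remarks. Where you genuinely depart is on over-estimation, and your diagnosis there is correct and more careful than the paper's one-line treatment: the bound \eqref{factor} carries a factor $n^2$, and the stated hypothesis on $\delta_n$ does \emph{not} make $n^2\delta_n\log(n)\exp(-\delta_n/|A|^2)$ summable. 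What the paper leaves implicit is that this factor disappears in the present setting: under the hypotheses of Theorem~\ref{th:underestimation} the depth bound $d$ is a fixed constant, so every candidate tree has a bounded number of nodes, and the Remark following Theorem~\ref{th:overestimation} replaces $n^2$ by the constant $k(n)=|A|^{d}$, giving $\P\bigl(\hat{T}(X_1^n)\npreceq T_0\bigr)\le 2e\bigl(\delta_n\log(n)+|A|^2\bigr)|A|^{d}\exp\bigl(-\delta_n/|A|^2\bigr)$; together with the easy fact that $\hat{T}\preceq T_0$ implies $\hat{T}|_K\preceq T_0|_K$, this is summable under your hypothesis (the $|A|^2$ term being dominated since $\delta_n\to+\infty$, as you note), and the argument closes.

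Your alternative repair is half right. The truncation observation --- over-estimation hanging below a context of depth $\ge K$ cannot destroy $\hat{T}|_K\preceq T_0|_K$, so only the finitely many contexts $s\in T_0$ with $|s|<K$ matter --- is true and usable. But the second ingredient, treating $\sum_{v\succeq w}\Delta_n(v)$ as ``a single martingale-type log-likelihood ratio'' to which Appendix~A applies with no union factor, is misdescribed and unsupported: the pruning decision $\C_s(X_1^n)=1$ is a \emph{maximum} of $\1\{\Delta_n(v)\ge\delta_n\}$ over extensions $v\succeq s$ (see \eqref{funct_cont}), not a sum, and Theorem~\ref{th:borneD} is a per-node inequality, so some union over nodes is unavoidable. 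If your proof rested on that aggregate inequality as stated, it would have a gap; it is rescued only because, with $d$ and $K$ both fixed, the union runs over at most order $|A|^{K+d}$ pairs $(s,v)$, so the crude union bound combined with Theorem~\ref{th:borneD} already yields your claimed $C'\bigl(\delta_n\log(n)+|A|^2\bigr)\exp\bigl(-\delta_n/|A|^2\bigr)$ --- which is precisely the Remark's route, and is what you should invoke instead of inventing a new martingale argument.
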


\section{Proofs} \label{sec:proofs}
\subsection{Proof of Proposition~\ref{prop:comparaison}.}
\label{sec:comp}

We must prove that a leaf in  $\hat{T}_{PML}(X_1^n)$ is always a leaf or an internal node in $\hat{T}_{C}(X_1^n)$. 
By the characterization of $\hat T_{C}(X_1^n)$ and $\hat T_{PML}(X_1^n)$ given by equations (\ref{alg_cont}) and (\ref{alg_pml}), respectively, this is equivalent to proving that $\XX{w}{}\leq \C_w(X_1^n)$ for all $w\in \V_n$ with $|w| < d$. In fact, assume that $\XX{w}{}=1$ implies $\C_w(X_1^n)=1$, and take $w\in\hat T_{PML}(X_1^n)$; then, either $|w|=d$ and $w\in \hat T_{C}(X_1^n)$, or it holds that for all $u\prec w$, $\XX{u}{}=1$, which implies by assumption that $\C_u(X_1^n)=1$. Now, if $\C_w(X_1^n) = 0$, then $w \in \hat T_{C}(X_1^n)$; otherwise, $w$ is a proper suffix of a sequence $v\in T_C(X_1^n)$. In any case, $w$ is a leaf or an internal node of $\hat{T}_C(X_1^n)$.

 Assume  there exists $w\in \V_n$, $|w|<d$,  such that 
$\XX{w}{}=1$ and $\C_w(X_1^n)=0$.  Note that by (\ref{funct_cont}), $\C_w(X_1^n)=0$ implies $\C_{uw}(X_1^n)=0$
for all $uw\in \V_n$, $|uw|\leq d$; hence, $w$ can be chosen such that $\XX{bw}{}=0$ for any $bw\in \V_n$, $b\in A$.
In this case we have, by  the definitions  (\ref{value})   and (\ref{ind}) that 
\begin{align}\label{eq:xxleqc}
e^{-f(n)} \hat\P_{\ML,w}(X_1^n)\; &< \; \prod_{b\colon  bw\in \V_n} V_{bw}(X_1^n) \\
& = \; \prod_{b\colon  bw\in \V_n} e^{-f(n)} \hat\P_{\ML,bw}(X_1^n) \,.
\end{align}
 The equality in the second line of the last expression follows by the fact that $\XX{bw}{}=0$ for any $bw\in \V_n$, $b\in A$; therefore 
we must have $V_{bw}(X_1^n) = e^{-f(n)} \hat\P_{\ML,bw}(X_1^n)$ for any $bw\in \V_n$, $b\in A$. 

Now, observe that for any $a\in A$, $N_n(w,a) = \sum_{b\colon  bw\in \V_n} N_n(bw,a)$
and $|\{b\colon  bw\in \V_n\}|\geq 2$.  If  not, $N_n(w,a)$ would be equal to $N_n(cw,a)$ for some $c\in A$ and for all $a\in A$, implying that $\hat\P_{\ML,cw}(X_1^n) = \hat\P_{\ML,w}(X_1^n)$; hence  
\[\prod_{b\colon  bw\in \V_n} V_{bw}(X_1^n) =  V_{cw}(X_1^n)  =  e^{-f(n)} \hat\P_{\ML,cw}(X_1^n)  =  e^{-f(n)} \hat\P_{\ML,w}(X_1^n) \] 
and thus, by definition, $\XX{w}{}=0$. Using these facts, and taking logarithm on both sides of Inequality~\eqref{eq:xxleqc}, we obtain
\begin{align*}
 \Big(\big|\{b\colon  bw\in \V_n\}\big|-1\Big)\,f(n)\; &< \; \sum_{b\colon  bw\in \V_n} \sum_{a\in A} N_n(bw,a) \log \frac{\hat p_n(a|bw)}{\hat p_n(a|w)} \\
 & = \;  \sum_{b\colon  bw\in \V_n} N_{n}(bw)\,D\left( \hat{p}_n(\cdot |bw) ;  \hat{p}_n(\cdot |w) \right)\; = \; \Delta_n(w)\,.
\end{align*}
Therefore, if $\delta_n \leq f(n)$ we have
$\delta_n < \Delta_n(w)$ which contradicts the fact that $\C_w(X_1^n)=0$. This concludes the proof of Proposition~\ref{prop:comparaison}.

\subsection{Proof of Theorem~\ref{th:overestimation}.}
\label{sec:over}

We will prove the result for the case $ \hat{T}(X_1^n)=\hat{T}_{C}(X_1^n)$. The case $ \hat{T}(X_1^n)=\hat{T}_{PML}(X_1^n)$
follows straightforwardly from Proposition~\ref{prop:comparaison} and equality $f(n)=\delta_n$.  

Let $O_n$ be the event $\bigl\{\hat{T}_{C}(X_1^n) \npreceq T_0\bigr\}$.
Overestimation occurs if at least one internal node $w$ of $\hat{T}_{C}(X_1^n)$ has a (non necessarily proper) suffix $s$ in $T_0$; that is, if there exists a (possibly empty) sequence $u$ such that $w=us$. Thus,  with a little abuse of notation $O_n$ can be written as
\[
O_n= \bigcup_{s\in T_0} \bigcup_{u\in A^*} \left\{\Delta_n(us) > \delta_n\right\}.
\]
For any sequence $w\in A^*$ we have that $\hat p_n(\cdot|w)$ are the maximum likelihood estimators of the transition probabilities $p(\cdot|w)$, therefore we have that 
\begin{align*}
\Delta_n(w) &= \sum_{b\in A} N_{n}(bw)D\left( \hat{p}_n(\cdot |bw)  ;  \hat{p}_n(\cdot |w) \right)\\
 &= \sum_{b\in A} N_{n}(bw) \sum_{a\in A} \left(\hat{p}(a|bw)\log \hat{p}(a|bw) - \hat{p}(a|bw)\log \hat{p}(a|w) \right)\\
 &= \left( \sum_{b\in A} N_{n}(bw) \sum_{a\in A} \hat{p}(a|bw)\log \hat{p}(a|bw) \right) -  \sum_{b\in A} \sum_{a\in A} N_{n}(bw,a)\log \hat{p}(a|w)\\
&= \left( \sum_{b\in A} N_{n}(bw) \sum_{a\in A} \hat{p}(a|bw)\log \hat{p}(a|bw) \right) -  \sum_{a\in A} N_{n}(w,a)\log \hat{p}(a|w)\\
&\leq \left( \sum_{b\in A} N_{n}(bw) \sum_{a\in A} \hat{p}(a|bw)\log \hat{p}(a|bw) \right) -  \sum_{a\in A} N_{n}(w,a)\log p(a|w)\\
&= \left( \sum_{b\in A} N_{n}(bw) \sum_{a\in A} \hat{p}(a|bw)\log \hat{p}(a|bw) \right) -  \sum_{b\in A} \sum_{a\in A} N_{n}(bw,a)\log p(a|w)\\
& = \sum_{b\in A} N_{n}(bw) \sum_{a\in A} \left(\hat{p}(a|bw)\log \hat{p}(a|bw) - \hat{p}(a|bw)\log p(a|w) \right)\\
& = \sum_{b\in A} N_{n}(bw)D\left( \hat{p}_n(\cdot |bw)  ;  p(\cdot |w) \right)
\end{align*}
Hence, as for all $b\in A$ it holds that $p(\cdot | w) = p(\cdot | bw)$ we obtain  
\[
\P\left(\Delta_n(w)>\delta_n\right) \,\leq \,\P\Biggl(\,  \sum_{b\in A} N_{n}(bw)D\left( \hat{p}_n(\cdot |bw)  ;  p(\cdot |bw) \right) >  \delta_n\Biggr).
\]
Using Theorem \ref{th:borneD}, stated in Appendix~A, it follows that 
\begin{align*}
\P(O_n) &\leq \sum_{s\in T_0} \sum_{u\in A^*} \P\left(\Delta_n(us) > \delta_n \right) \\
& \leq\sum_{s\in T_0} \sum_{u\in A^*} \P\left(\,\sum_{b\in A } N_{n}(bus)D\left( \hat{p}_n(\cdot |bus)  ;  p(\cdot |bus)\right) > \delta_n\right)\\
& \leq \sum_{s\in T_0} \sum_{u\in A^*} \sum_{b\in A}\,\P\left(N_{n}(bus)D\left( \hat{p}_n(\cdot |bus)  ;  p(\cdot |bus)\right)  > \frac{\delta_n}{|A|}\, \Big|\, N_{n}(bus)>0\right) \P\left( N_{n}(bus)>0\right)\\
& \leq 2e\left(\delta_n\log n +|A|^2\right)\exp\left(-\frac{\delta_n}{|A|^2}\right) \sum_{s\in T_0} \sum_{u\in A^*}\sum_{b\in A}  \P\left( N_{n}(bus)>0\right)\\
&\leq 2e\left(\delta_n\log n +|A|^2\right)\exp\left(-\frac{\delta_n}{|A|^2}\right) \E[C_n], 
\end{align*}
where $C_n$ denotes the number of different contexts of the symbols in $X_1^n$.
But $C_n$ is always upper-bounded by the number $n(n-1)/2$ of (non-necessarily distinct) contexts of $X_1^n$, and the result follows.

\subsection{Proof of Theorem~\ref{th:underestimation}.}
\label{sec:under}

In this case we will prove the result for the case $ \hat{T}(X_1^n)=\hat{T}_{PML}(X_1^n)$. The case $ \hat{T}(X_1^n)=\hat{T}_{C}(X_1^n)$ follows again from Proposition~\ref{prop:comparaison} and  the assumption that $\delta_n=f(n)$.  

 If $U_n$ denotes the event  $\{T_0|_{K} \npreceq \hat{T}_{PML}(X_1^n)|_K\}$ then 
\begin{equation*}
U_n \;\subset \bigcup_{w\prec u\in T_0|_{K}} \{\,\XX{w}{} = 0\,\} \,.
\end{equation*}
Let ${w\prec u\in T_0|_{K}}$. 
Then we have
\begin{align}
\P\left(\, \XX{w}{} = 0 \,\right)  \;= \;\,\P\left(\;\prod_{a\in A\colon aw\in \V_n} 
V_{aw}(X_1^n)\, \leq\, e^{-f(n)}\hat\P_{\ML,w}(X_1^n)\,\right)\,.
\end{align}
By hypothesis, there exists $r\leq d-|w|$ and $s\in A^r$ such that 
\[
\max_{a\in A} |p(a|w) - p(a|sw)|\,\geq\, \epsilon\,.
\]
If $s=(s_1\dotsc s_r)$, denote by $A_i=A\setminus\{s_i\}$ and let $T$ be the tree given by
\[
T = \cup_{i=2}^r\cup_{b\in A_i}\{bs_i^rw\}\cup\{sw\} \,.
\]
By  definition, for any $aw\in \V_n$ it can be shown recursively that 
\[
V_{aw}(X_1^n) = \max_{T'\in\sft_n} \prod_{v\in T'_{aw}}e^{-f(n)}\hat\P_{\ML,v}(X_1^n) 
\]
see for example Lemma~4.4 in \cite{csiszar2006}.
Therefore,
\begin{align}\label{prob1}
\P\Bigg(\prod_{a\in A\colon aw\in \V_n} &
V_{aw}(X_1^n)\, \leq\, e^{-f(n)}\hat\P_{\ML,w}(X_1^n) \Bigg)\notag \\
& \leq\; \P\left(\; \prod_{u\in T}e^{-f(n)}\hat\P_{\ML,u}(X_1^n)
\, \leq\, e^{-f(n)}\hat\P_{\ML,w}(X_1^n)\;\right)
\end{align}
by noticing that  
\begin{align*}
\prod_{a\in A\colon aw\in \V_n} \max_{T'\in\sft_n} \prod_{v\in T'_{aw}}e^{-f(n)}\hat\P_{\ML,v}(X_1^n) 
\;&\geq\; \prod_{a\in A\colon aw\in \V_n} \prod_{v\in T_{aw}}e^{-f(n)}\hat\P_{\ML,v}(X_1^n)\\
&\geq\; \prod_{u\in T}e^{-f(n)}\hat\P_{\ML,u}(X_1^n)\,.
\end{align*}
Applying logarithm and using that $N_n(w,a) = \sum_{u\in T}N_n(u,a)$ for any $a\in A$ we can write the probability in (\ref{prob1}) by  
\begin{align}\label{prob2}
\P\Bigl(\,\sum_{u\in T} N_n(u) &D(\hat p_n(\cdot|u)\,;\,\hat p_n(\cdot|w)) \,\leq\, (|T| - 1) f(n) \,\Bigr)\notag\\
&\leq\; \P\Bigl(\, N_n(sw) D(\hat p_n(\cdot|sw)\,;\,\hat p_n(\cdot|w)) \,\leq\, (|T| - 1) f(n) \,\Bigr)\,.
\end{align}
Define the events $A_n^{s,w}$ and $B_n^{s,w}$ by 
\begin{align*}
A_n^{s,w} &=\left\{X_1^n\colon N_n(sw) D(\hat p_n(\cdot|sw)\,;\,\hat p_n(\cdot|w))\,\leq\, (|T| - 1) f(n)\right\}\\
B_n^{s,w} &= \left\{X_1^n\colon   D(\hat p_n(\cdot|sw)\,;\,\hat p_n(\cdot|w)) > \epsilon^2/8 \right\}\,.
 \end{align*}
Then we can bound above the probability in (\ref{prob2}) by 
$\P\bigl(\, A_n^{s,w} \cap B_n^{s,w} \,\bigr) + \P\bigl(\, (B_n^{s,w})^c\,\bigr)$. 
To bound the first term note that by Lemma~\ref{lemma:Nn}, if $n$ satisfies
\[
\frac{f(n)}{n}\,<\,\frac{\epsilon^2p(sw)}{8(|T|-1)}
\]
then, using the bound $|T|-1\leq |A|r\leq |A|d$ we obtain 
\begin{align*}
\P\bigl(\, A_n^{s,w} \cap B_n^{s,w} \,\bigr) \;&\leq\; \P\Bigl(\, N_n(sw) \,\leq\, \frac{8(|T| - 1) f(n)}{\epsilon^2} \,\Bigr)\\
& \leq\;\,e^{\alpha_0/8e^2|A|^2(|A|\beta+2\alpha_0)} |A|
\exp \Bigl(\frac{- n\, [p(sw)-\frac{8|A|d\,f(n)}{\epsilon^2n}]^2}{|sw|+1}\Bigr)\,.
\end{align*}
On the other hand, by Lemma~\ref{lemma:div} we have
\[
\P\left(\,(B_n^{s,w})^c \,\right) \;\leq\; 2 e^{\alpha_0/32e^2|A|^2(|A|\beta+2\alpha_0)} (|A|+1)\exp\Bigl[ -n\frac{\epsilon^2p(sw)^2}{16(|sw|+1)}  \Bigr]\,.
\]
We conclude the proof of Theorem~\ref{th:underestimation} by observing that we only 
have a finite number 
of sequences $w\prec u\in T_0|_K$, therefore we obtain 
\begin{align*}
\P(U_n)\;\leq\; 3e^{\alpha_0/32e^2|A|^2(|A|\beta+2\alpha_0)} |A|^{2+K} \exp\left(\frac{- \, n\epsilon^2[p_{\min}^d-\frac{8|A|d\,f(n)}{\epsilon^2n}]^2}
{16(d+1)}\right)\,.
\end{align*}

\subsection{Proof of Theorem~\ref{strong:cons}.}
\label{sec:cons}

The statement of the Theorem follows straightforward from Theorems \ref{th:overestimation} and \ref{th:underestimation} and the Borel-Cantelli Lemma, by noticing that the upper bounds for 
\[
\P(\hat T_C(X_1^n)|_K \neq T_0|_K) \;\leq\;  \P(\hat T_C(X_1^n)|_K \npreceq T_0|_K) \,+\, \P(T_0|_K \npreceq \hat T_C(X_1^n)|_K  )
\]
are summable in $n$. The same reasoning applies to $\hat T_{PML}(X_1^n)$ when $f(n)=\delta_n$.

\section{Discussion}
\label{sec:ccl}

In this paper we showed a relation between two classical algorithms for context tree selection. We proved that for a proper set of parameters, the Penalized Maximum Likelihood estimator always yields a smaller tree than the tree given by the algorithm Context.  This relation between the empirical context trees allows us to derive, in an unified way, non-asymptotic bounds for
the probability of over- and under-estimation of the context tree generating the sample. The tree may be unbounded, and our results apply to processes that do not necessarily have a finite memory. 

Concerning under-estimation, we assume the process satisfies some conditions that implies exponential inequalities for the empirical probabilities. These inequalities were obtained in \cite{galves2008} under a stronger \emph{non-nullness} assumption; namely, that the transition probabilities were lower bounded by a positive constant. In this paper we show that the results also hold for a larger class of processes. 
It is conjectured that similar results cannot be obtained without assuming any non-nullness nor mixing condition of the process. 

Concerning over-estimation no mixing assumption is necessary for Theorem  \ref{th:overestimation} to hold. This improves on and generalizes the results obtained in \cite{galves2008,leonardi2009}. Our proof is based on deviation inequalities obtained for empirical Kullback-Leibler divergence, instead of $L^p$ norm; it appears  that this pseudo-metric is more intrinsic for binomial distributions (and partially also for multinomial distributions), as the binary Kullback-Leibler divergence is the rate function of a Large Deviations Principle.  Deriving similar inequalities is also possible for other distributions and thus other pseudo-metrics, or by using upper-bounds of the Legendre transform of the distribution, as in \cite{garivier2008}.
These type of inequalities are interesting on their own and prove useful in various settings: other applications of similar bounds may be found in \cite{garivier2008,filippiCappeGarivier10KLUCRL,COLT2011}.

From the point of view of most applications, over- and under-estimation play a different role.
In fact, data-generating processes can often not be assumed to have finite memory: the whole dependence structure cannot be recovered from finitely many observations and under-estimation is unavoidable. 
All what can be expected from the estimator is to highlight evidence of as much dependence structure as possible, while maintaining a limited probability of false discovery. 

Our results imply the strong consistency of the algorithm Context for processes of infinite memory, generalizing the convergence in probability of this estimator previously obtained in \cite{duarte2006}. Likewise, the strong consistency  for the PML estimator is also derived  for a larger class of penalizing functions than in \cite{leonardi2009}.

\section*{Appendix A: Martingale deviation inequalities}
\label{sec:mart}
\renewcommand{\thesection}{A}

This section contains the statement and derivation of two deviation inequalities that are useful to prove the results of
this paper. As they are interesting on their own, we include them in a separate section.
The ingredients of the proofs are mostly inspired by \cite{neveu72}, see also \cite{csiszar2002}. 

We briefly recall some notation so as to keep this section self-contained.
Let $\left( X_n \right)_{n\in\Z}$ be a stationary process whose (possibly infinite) context tree is $T_0$, and let $\F_n$ be the $\sigma$-field generated by  $(X_j)_{j\leq n}$.
For $k\in\N$, $w\in A^k$, denote $p(b|w)=\P\left( X_{k+1}=b | X_{1}^{k}=w \right)$. For $j\geq 1$, define
\[
\x_j = \1\{X_{j-k}^{j-1}=w\}\quad\text{ and }\quad
\X_j = \1\{X_{j-k}^j=wb\}\;,
\]
so that $N_n(w) = \sum_{j=1}^n\x_j$ and $N_n(w,b) = \sum_{j=1}^n\X_j$.
Denote $\ph_n(b |w)= N_n(w,b) / N_{n}(w)$. 
 The Kullback-Leibler divergence between Bernoulli variables will be denoted by $d$: for all $p,q\in[0,1]$,
$$d(p;q) = p\log\frac{p}{q}+(1-p)\log\frac{1-p}{1-q}.$$

\begin{prop}\label{prop:borned} Let $k$ be a positive integer, let $w\in A^k$ and let $b\in A$. 
Then for any $\delta>0$
  $$\P\left[ N_{n}(w)d\left( \ph_n(b|w) ; p(b|w) \right)>\delta\right] \;\leq\; 2e\left\lceil \delta\log(n) \right\rceil \exp(-\delta).$$
\end{prop}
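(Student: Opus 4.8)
The plan is to use the Chernoff-type variational characterization of the binary Kullback--Leibler divergence together with an exponential martingale, turning the self-normalized deviation into a family of ordinary exponential deviation bounds indexed by a tilting parameter $\lambda$, and then to control the supremum over $\lambda$ by a careful discretization in the spirit of the Law of the Iterated Logarithm proofs of \cite{neveu72,csiszar2002}. The first step is to record the dual representation
\[
N_{n}(w)\,d\bigl(\ph_n(b|w)\,;\,p(b|w)\bigr)\;=\;\sup_{\lambda\in\R}\,g(\lambda),\qquad g(\lambda)\;=\;\lambda\,N_n(w,b)\;-\;N_{n}(w)\,\phi(\lambda),
\]
where $\phi(\lambda)=\log\bigl(1-p(b|w)+p(b|w)e^{\lambda}\bigr)$ is the log-moment generating function of a Bernoulli$(p(b|w))$ variable and the supremum is attained at $\lambda^\star=\log\frac{\ph_n(b|w)(1-p(b|w))}{(1-\ph_n(b|w))\,p(b|w)}$ (using $N_n(w,b)=N_n(w)\ph_n(b|w)$). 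This reduces the claim to bounding $\P[\sup_\lambda g(\lambda)>\delta]$.

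Next I would introduce, for each fixed $\lambda$, the process
\[
M_n(\lambda)\;=\;\exp\bigl(\lambda\,N_n(w,b)-\phi(\lambda)\,N_{n}(w)\bigr)\;=\;\prod_{j=1}^{n}\exp\bigl(\lambda\,\X_j-\phi(\lambda)\,\x_j\bigr).
\]
Since $\x_j$ is $\F_{j-1}$-measurable and, on $\{\x_j=1\}$, the next symbol is distributed according to $p(\cdot|w)$ — so that $\E[\X_j\mid\F_{j-1}]=\x_j\,p(b|w)$, which is exactly where the defining context-tree structure of the process enters — a direct computation of $\E[e^{\lambda\X_j-\phi(\lambda)\x_j}\mid\F_{j-1}]$ gives the value $1$ whether $\x_j=0$ or $\x_j=1$. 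Hence $M_n(\lambda)$ is a nonnegative martingale with $\E[M_n(\lambda)]\le 1$, and Markov's inequality yields, for every $\lambda$, the elementary bound $\P[g(\lambda)>\delta-1]=\P[M_n(\lambda)>e^{\delta-1}]\le e^{1-\delta}$.

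Finally I would pass from these fixed-$\lambda$ bounds to the supremum. I would split the event according to the sign of $\ph_n(b|w)-p(b|w)$, which accounts for the factor $2$ and lets me treat $\lambda>0$ (upper tail) and $\lambda<0$ (lower tail) separately. On each side the constraint $1\le N_{n}(w)\le n$ confines the maximiser $\lambda^\star$ to a bounded range, which I would cover by a grid $\lambda_1,\dots,\lambda_m$ with $m=\lceil\delta\log n\rceil$ points, chosen so that by concavity of $g$ every realisation with $g(\lambda^\star)>\delta$ has some grid point $\lambda_i$ with $g(\lambda_i)>\delta-1$. A union bound over the grid then gives $\P[\sup_{\lambda>0}g(\lambda)>\delta]\le m\,e^{1-\delta}=e\lceil\delta\log n\rceil e^{-\delta}$, and adding the symmetric lower-tail contribution produces $2e\lceil\delta\log n\rceil e^{-\delta}$.

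I expect the main obstacle to be this discretization step. One must choose the spacing of the grid so that the curvature of $\phi$ — which makes the interval $\{\lambda:g(\lambda)>\delta-1\}$ around $\lambda^\star$ shrink as $\lambda^\star$ grows — is matched, while keeping the total number of points down to exactly $\lceil\delta\log n\rceil$ and the per-cell loss bounded by the factor $e$. Checking that the range of $\lambda^\star$ imposed by $N_{n}(w)\le n$ is covered by this many points, with the clean constant $2e$, is the delicate, \emph{LIL}-flavoured part of the argument; the martingale computation and the variational identity are otherwise routine.
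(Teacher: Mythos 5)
Your skeleton is, in substance, the paper's own proof read in the dual variable: your martingale $M_n(\lambda)$ coincides with the paper's $W^{\lambda}_n$, your per-point Markov bound with loss $e^{1-\delta}$, the factor $2$ from the two tails, and the count $\lceil\delta\log n\rceil$ all match, and the paper's ``peeling'' of the range of $N_n(w)$ into geometric slices $t_{k-1}<N_n(w)\le t_k$ (with $t_k=\lfloor(1+\eta)^k\rfloor$, $\eta=1/(\delta-1)$) is nothing but the choice of one tilting parameter $\lambda(z_k)$ per slice, i.e.\ a grid of exactly your size. The genuine gap is that the one step you leave open --- constructing the grid and proving the inclusion $\{\sup_\lambda g(\lambda)>\delta,\ \ph_n(b|w)>p\}\subset\bigcup_i\{g(\lambda_i)>\delta-1\}$ --- is the heart of the proposition, and the mechanism you propose for it (concavity of $g$ plus a spacing matched to the curvature of $\phi$) cannot produce a grid of size $\lceil\delta\log n\rceil$. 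The width of the super-level set $\{\lambda\colon g(\lambda)>\delta-1\}$ is governed not by $\lambda^\star$ alone but by the pair $(\lambda^\star,N_n(w))$: for realisations with $g(\lambda^\star)$ barely above $\delta$ and $N_n(w)$ of order $n$ (which force $\lambda^\star$ to be small but nonzero), that set has width of order $\sqrt{2/\bigl(n\,\phi''(\lambda^\star)\bigr)}$, so any covering based on local curvature needs on the order of $\sqrt{n}$ points, not $\delta\log n$. Your heuristic even points the wrong way: jointly with the constraint $N_n(w)\,d(\ph_n;p)>\delta$, large $\lambda^\star$ is the easy regime (then either $N_n(w)$ is small or $g(\lambda^\star)\gg\delta$, and in both cases the super-level set is wide); it is near $\lambda=0$ that the grid must accumulate.

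What makes $\lceil\delta\log n\rceil$ points suffice is monotonicity, not curvature, and the grid must be indexed by the possible magnitudes of the count --- this is precisely the peeling idea. Take $q_k\in(p,1]$ with $d(q_k;p)=\delta/(1+\eta)^k$ and $\lambda_k=\log\frac{q_k(1-p)}{(1-q_k)p}>0$, for $k=1,\dots,m$ with $m=\lceil\log n/\log(1+\eta)\rceil\le\lceil\delta\log n\rceil$. On the upper-tail event one has $d(\ph_n;p)>\delta/N_n(w)\ge\delta/n$, so some $k$ satisfies $d(\ph_n;p)/(1+\eta)\le d(q_k;p)\le d(\ph_n;p)$, hence $q_k\le\ph_n$; since $\lambda_k>0$, the map $q\mapsto\lambda_k q-\phi(\lambda_k)$ is increasing, and therefore $g(\lambda_k)=N_n(w)\bigl(\lambda_k\ph_n-\phi(\lambda_k)\bigr)\ge N_n(w)\,d(q_k;p)\ge N_n(w)\,d(\ph_n;p)/(1+\eta)>\delta/(1+\eta)=\delta-1$ (in the boundary case $d(\ph_n;p)>\delta/(1+\eta)$ take $k=1$ and use $N_n(w)\ge1$). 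No Bregman or quadratic approximation enters anywhere. With this chain your outline closes and becomes line for line the paper's proof of Proposition~\ref{prop:borned}; without it, your third paragraph asserts exactly the claim that has to be proved.
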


\begin{proof}
Denote by $p=p(b|w)$, $N_n = N_n(w)$, $S_n = N_n(w, b)$  and $\ph_n= S_n/N_{n}$. For  every $\lambda >0$, let 
$$\phi_p(\lambda) = \log \E\left[ \exp\left( \lambda X_1 \right) \right]=\log \left( 1-p+p \exp\left( \lambda \right) \right).$$
Let also $W^{\lambda}_0=1$ and for $t\geq 1$, $$W^{\lambda}_t = \exp(\lambda S_t - N_{t-1} \phi_p(\lambda)).$$
First, note that $\left(W^{\lambda}_t\right)_{t\geq 0}$ is a martingale relative to $\left( \F_t \right)_{t\geq 0}$ with expectation $\E[W^{\lambda}_0] = 1$. In fact, 
\begin{eqnarray*}
\E\left[ \exp\left( \lambda\left( S_{t+1}-S_t \right) \right) | \F_t\right] &=&\E\left[ \exp\left( \lambda \X_{t+1}\right) | \F_t\right]\\
& =& \exp\left( \x_t\phi_p\left( \lambda \right) \right)\\
&=& \exp \left( \left( N_t-N_{t-1} \right) \phi_p\left( \lambda \right) \right)
\end{eqnarray*}
which can be rewritten as
\[
\E\left[\exp\left( \lambda S_{t+1}-N_t\phi_p\left( \lambda \right) \right)  | \F_{t}\right] = \exp\left( \lambda S_{t}-N_{t-1}\phi_p\left( \lambda \right) \right).
\]

To proceed, we make use of the so-called 'peeling trick' \cite{massart2007}: we divide the interval $\{1,\dots, n\}$ of possible values for $N_{n}$ into "slices" $\{t_{k-1}+1,\dots,t_k\}$ of geometrically increasing size, and treat the slices independently.
We may assume that $\delta>1$, since otherwise the bound is trivial. Take $\eta = 1/(\delta-1)$, let $t_0=0$ and for $k\in\N^{*}$, let $t_k = \left\lfloor (1+\eta)^k \right\rfloor$.
Let $m$ be the first integer such that $t_m\geq n$, that is $m=\left\lceil\frac{\log n}{\log 1+\eta}\right\rceil$.
Let $A_k = \left\{t_{k-1} < N_{n}\leq t_k\right\}\cap \left\{ N_{n}d\left( \ph_n ; p \right)>\delta\right\}$. We have:
\begin{equation}
\label{eq:peeling:basic}
\P\left(N_{n}d\left( \ph_n ; p \right)>\delta\right)  \leq \P\left( \bigcup_{k=1}^m A_k\right) \leq \sum_{k=1}^m \P\left(A_k\right).
\end{equation}

We upper-bound the probability of $A_k \cap \{\ph_n>p\}$, the same arguments can easily be transposed for left deviations.
Let $s$ be the smallest integer such that  $\delta/(s+1)\leq d(1;p)$; if $N_n\leq s$, then $N_nd(\ph_n, p)\leq s d(\ph_n, p)\leq sd(1,p)< \delta$ and $\P(N_{n}d(\ph_n, p)\geq \delta, \ph_n>p)=0$.
Thus, $\P\left(A_k\right)=0$ for all $k$ such that $t_k\leq s$.

Take $k$ such that $t_k>s$, and let $\tilde{t}_{k-1} = \max\{t_{k-1}, s\}$.
Let $x \in ]p, 1]$ be such that $d(x;p)=\delta/N_{n}$, and let $\lambda(x)=\log(x\left( 1-p \right))-\log (p\left( 1-x \right))$, so that 
$d(x; p) = \lambda(x)x-\phi_p(\lambda).$
Let $z$ such that $z\geq p$ and $d(z, p)=\delta/(1+\eta)^{k}$.
Observe that:
\begin{itemize}
 \item if $N_{n}>t_{k-1}$, then $$d(z; p) = \frac{\delta}{(1+\eta)^{k}}  \geq \frac{\delta}{(1+\eta)N_{n}};$$
 \item if $N_{n}\leq t_k$ then, as
 $$d(\ph_n; p)>\frac{\delta}{N_{n}}>\frac{\delta}{(1+\eta)^k} = d(z; p),$$
  we have :
  $$\ph_n\geq p \hbox{ and } d(\ph_n; p)>\frac{\delta}{N_{n}}\implies \ph_n\geq z.$$
\end{itemize}
Hence, on the event $\left\{t_{k-1}<N_{n}\leq t_k\right\} \cap \left\{\ph_n\geq p\right\} \cap \left\{d(\ph_n; p)>\frac{\delta}{N_{n}}\right\}$ it holds that 
$$\lambda(z) \ph_n - \phi_p(\lambda(z)) \geq 
\lambda(z) z - \phi_p(\lambda(z)) = d(z; p)
\geq\frac{\delta}{(1+\eta)N_{n}}.$$
Putting everything together,
\begin{align*}
\left\{ \tilde{t}_{k-1}<N_{n}\leq t_k \right\}\cap\left\{ \ph_n\geq p  \right\}\cap\left\{d(\ph_n; p) \geq \frac{\delta}{N_{n}} \right\}&\subset \left\{ \lambda(z) \ph_n - \phi_p(\lambda(z)) \geq  \frac{\delta}{N_{n}\left( 1+\eta \right)}  \right\}\\
& \subset \left\{  \lambda(z)S_n - N_{n} \phi_p(\lambda(z)) \geq \frac{\delta}{1+\eta} \right\} \\
& \subset \left\{  W^{\lambda(z)}_n > \exp \left(\frac{\delta}{1+\eta}\right)\right\}.
\end{align*}
As $\left(W^{\lambda}_t\right)_{t\geq0}$ is a martingale, $\E\left[W^{\lambda(z)}_n\right] = \E\left[W^{\lambda(z)}_0\right] =1$, and the Markov inequality yields:
\begin{align}
\P\left(\left\{\tilde{t}_{k-1}<N_{n}\leq t_k\right\}\cap \left\{ \ph_n\geq p \right\} \cap \left\{N_{n}d(\ph_n, p)\geq \delta \right\} \right) &\leq 
\P\left(W^{\lambda(z)}_n > \exp\left(\frac{\delta}{1+\eta}\right) \right) \label{eq:umarkov} \\
&\leq \exp\left(-\frac{\delta}{1+\eta}\right).\nonumber
\end{align}
Similarly,
\begin{equation*}
\P\left(\left\{\tilde{t}_{k-1}<N_{n}\leq t_k\right\} \cap \left\{ \ph_n\leq p \right\} \cap \left\{N_{n}d(\ph_n, p)\geq \delta \right\}\right)\leq \exp\left(-\frac{\delta}{1+\eta}\right),\end{equation*}
so that
\begin{equation*}
\P\left(\left\{\tilde{t}_{k-1}<N_{n}\leq t_k\right\} \cap \left\{N_{n}d(\ph_n, p)\geq \delta \right\} \right) \leq 2\exp\left(-\frac{\delta}{1+\eta}\right).\end{equation*}
Finally, by Equation \eqref{eq:peeling:basic},
$$ \P\left( \bigcup_{k=1}^m \left\{\tilde{t}_{k-1}<N_{n}\leq t_k\right\} \cap \left\{N_{n}d(\ph_n, p) \geq \delta\right\}\right)  \leq 2m \exp\left(-\frac{\delta}{1+\eta}\right).$$
But as $\eta = 1/(\delta-1)$, $m=\left\lceil\frac{\log n}{\log \left( 1+1/(\delta-1) \right)}\right\rceil$ and as $\log(1+1/(\delta-1))\geq 1/\delta$, we obtain:
 $$\P\left( N_{n}d(\ph_n, p) \geq \delta\right) \leq 2\left\lceil\frac{\log n}{\log \left( 1+\frac{1}{\delta-1}\right)}\right\rceil 
  \exp(-\delta+1)\leq 2e\left\lceil \delta\log(n) \right\rceil \exp(-\delta).
 $$ 
\end{proof}

\begin{rem}\label{rem:bornedcond}
The bound of Proposition \ref{prop:borned} also holds for $\P\left( N_{n}d(\ph_n, p) \geq \delta | N_{n}>0\right)$: in fact, as 
\begin{multline*}
1=\E\left[W^{\lambda(z)}_n\right] = \E\left[W^{\lambda(z)}_n| N_{n}>0 \right] \P(N_{n}>0) + \E\left[W^{\lambda(z)}_n| N_{n}=0 \right] \P(N_{n}=0) \\= \E\left[W^{\lambda(z)}_n| N_{n}>0 \right] \P(N_{n}>0) + 1- \P(N_{n}>0),
\end{multline*}
it follows that $ \E\left[W^{\lambda(z)}_n| N_{n}>0 \right] =1$ and starting from Equation \ref{eq:umarkov}, the proof can be rewritten conditionally on $\{N_{n}>0\}$; this leads to:
$$\P\left( N_{n}d(\ph_n, p) \geq \delta | N_{n}>0 \right) \leq 2e\left\lceil \delta\log(n) \right\rceil \exp(-\delta).
 $$ 
However, in general no such result can be proved  for $\P\left( N_{n}d(\ph_n, p) \geq \delta | N_{n}>k \right)$ for positive values of $k$.
\end{rem}

To proceed, we need the following lemma:

\begin{lemma}\label{lem:devent}
For any probability distributions $P$ and $Q$ on the finite alphabet $A$,
 \begin{equation*}
D(P; Q) \leq \sum_{x\in A}d\left(P(x);Q(x)\right)\,.
\end{equation*}
\end{lemma}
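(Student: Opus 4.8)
The plan is to reduce the statement to the nonnegativity of the Kullback--Leibler divergence (Gibbs' inequality). Writing out the definition of the binary divergence $d$ term by term,
\[
\sum_{x\in A} d\bigl(P(x);Q(x)\bigr) \;=\; \sum_{x\in A} P(x)\log\frac{P(x)}{Q(x)} \;+\; \sum_{x\in A}\bigl(1-P(x)\bigr)\log\frac{1-P(x)}{1-Q(x)}\,.
\]
The first sum on the right is exactly $D(P;Q)$. Hence the asserted inequality is equivalent to the nonnegativity of the ``complementary'' sum
\[
S \,:=\, \sum_{x\in A}\bigl(1-P(x)\bigr)\log\frac{1-P(x)}{1-Q(x)} \;\geq\; 0\,.
\]

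The key observation is that $S$ is itself a genuine divergence, up to a scalar. When $|A|\geq 2$, the identities $\sum_{x\in A}(1-P(x)) = \sum_{x\in A}(1-Q(x)) = |A|-1$ show that $\tilde P(x) := (1-P(x))/(|A|-1)$ and $\tilde Q(x) := (1-Q(x))/(|A|-1)$ are probability distributions on $A$, and a direct substitution gives $S = (|A|-1)\,D(\tilde P;\tilde Q)$. Gibbs' inequality $D(\tilde P;\tilde Q)\geq 0$ then yields $S\geq 0$ and hence the lemma. Equivalently, one may invoke the log-sum inequality directly with $a_x = 1-P(x)$ and $b_x = 1-Q(x)$, whose lower bound $(|A|-1)\log\frac{|A|-1}{|A|-1}$ is $0$; the two routes are the same computation.

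The only real care needed — and the place I expect the ``main obstacle,'' though it is a mild one — is the bookkeeping of the boundary cases dictated by the conventions fixed for $D$ and $d$. If $Q(x)=0$ while $P(x)>0$ for some $x$, then $D(P;Q)=+\infty$, but the corresponding term $d(P(x);Q(x))$ is also $+\infty$, so the inequality holds as $+\infty\leq+\infty$; the cases $P(x)=0$ contribute nothing by the convention $0\log 0 = 0$, and whenever the left-hand side is infinite the right-hand side is seen to be infinite as well. The degenerate case $|A|=1$ must be treated separately, since then the normalisation by $|A|-1$ is void: but there $P=Q$ is forced to be a point mass, so both sides equal $0$ and the statement is trivial. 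Away from these boundary configurations the argument above applies verbatim, so I would dispatch the conventions first and then present the one-line divergence computation.
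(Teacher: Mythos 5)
Your proof is correct and takes essentially the same route as the paper's: both reduce the claim to the nonnegativity of the complementary sum $\sum_{x\in A}(1-P(x))\log\frac{1-P(x)}{1-Q(x)}$, which is identified as $(|A|-1)$ times the Kullback--Leibler divergence between the normalized distributions $\frac{1-P(x)}{|A|-1}$ and $\frac{1-Q(x)}{|A|-1}$, hence nonnegative by Gibbs' inequality. Your additional handling of the boundary conventions (zeros of $Q$, the case $|A|=1$) is a minor refinement that the paper leaves implicit.
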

\begin{proof}
\begin{eqnarray*}
\sum_{x\in A}d\left(P(x);Q(x)\right) -D(P;Q) &=& \sum_{x\in A}\left( 1-P(x) \right)\log \frac{1-P(x)}{1-Q(x)}\\
& = & \left(|X|-1\right) \sum_{x\in A}\frac{1-P(x)}{|A|-1}\log\left( \frac{(1-P(x)/(|A|-1)}{(1-Q(x)/(|A|-1)} \right)\\
& \geq  & 0
\end{eqnarray*}
because the sum in the next-to-last line is the Kullback-Leibler divergence between the probability distributions $R$ and $S$ defined on $A$ by:
\[R(x) = \frac{1-P(x)}{|A|-1} \quad\hbox{and}\quad S(x) = \frac{1-Q(x)}{|A|-1}\;.\]
\end{proof}
\begin{rem}
Obviously, this lemma is suboptimal for $|A|=2$ by a factor $2$.
For larger alphabets, it does not appear possible to improve on this bound for all $P$ and $Q$.
\end{rem}

We are now in position to state the deviation result we use in order to upper bound the probability of over-estimation: 
\begin{theorem}\label{th:borneD}
 Let $k$ be a positive integer and let $w\in A^k$. 
   Then, for any $\delta>0$
$$
\P\left[N_{n}(w)D\left(\ph_n\left(\cdot | w\right) ; p\left(\cdot | w\right)\right) > \delta\right] \;\leq\; 2e\left( \delta\log(n)+|A| \right)\exp \left( -\frac{\delta}{|A|} \right).$$
\end{theorem}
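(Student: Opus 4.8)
The plan is to reduce the multinomial deviation to $|A|$ binary deviations and then combine Lemma~\ref{lem:devent} with Proposition~\ref{prop:borned} through a union bound. First I would invoke Lemma~\ref{lem:devent} with $P=\ph_n(\cdot|w)$ and $Q=p(\cdot|w)$ and multiply through by the (nonnegative) count $N_n(w)$, obtaining the pointwise inequality
\[
N_n(w)\,D\bigl(\ph_n(\cdot|w)\,;\,p(\cdot|w)\bigr)\;\leq\;\sum_{a\in A} N_n(w)\,d\bigl(\ph_n(a|w)\,;\,p(a|w)\bigr)\,,
\]
where $d$ is the binary Kullback--Leibler divergence of the appendix. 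This turns the full-alphabet divergence into a sum of one-dimensional divergences, each of which is exactly the quantity controlled by Proposition~\ref{prop:borned}.

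Next I would pass to events. Since the left-hand side is dominated by the sum of the $|A|$ coordinate terms, the event $\{N_n(w)D>\delta\}$ forces at least one summand to exceed $\delta/|A|$; that is,
\[
\bigl\{\,N_n(w)\,D(\ph_n(\cdot|w)\,;\,p(\cdot|w))>\delta\,\bigr\}\;\subset\;\bigcup_{a\in A}\Bigl\{\,N_n(w)\,d\bigl(\ph_n(a|w)\,;\,p(a|w)\bigr)>\tfrac{\delta}{|A|}\,\Bigr\}\,.
\]
A union bound together with Proposition~\ref{prop:borned}, applied coordinatewise with the threshold $\delta/|A|$ in place of $\delta$, then yields
\[
\P\left[N_n(w)D\bigl(\ph_n(\cdot|w)\,;\,p(\cdot|w)\bigr)>\delta\right]\;\leq\;|A|\cdot 2e\left\lceil \tfrac{\delta}{|A|}\log(n)\right\rceil\exp\Bigl(-\tfrac{\delta}{|A|}\Bigr)\,.
\]

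Finally I would clean up the constant using the elementary bound $\lceil x\rceil\leq x+1$ on the ceiling, so that $|A|\lceil(\delta/|A|)\log(n)\rceil\leq \delta\log(n)+|A|$, which turns the last display into the claimed inequality
\[
\P\left[N_n(w)D\bigl(\ph_n(\cdot|w)\,;\,p(\cdot|w)\bigr)>\delta\right]\;\leq\;2e\bigl(\delta\log(n)+|A|\bigr)\exp\Bigl(-\tfrac{\delta}{|A|}\Bigr)\,.
\]
Here essentially all of the probabilistic difficulty is already absorbed into Proposition~\ref{prop:borned}, whose martingale/peeling argument supplies the self-normalized control of each binary divergence; the only genuinely new ingredients are the convexity-type comparison of Lemma~\ref{lem:devent} and the splitting of the budget $\delta$ into $|A|$ equal shares. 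Accordingly, the main point to watch is not an obstacle but a matter of bookkeeping: one must allocate the threshold as $\delta/|A|$ before the union bound (so that the exponential rate degrades exactly to $\delta/|A|$) and then handle the ceiling term carefully enough to recover the stated polynomial prefactor $\delta\log(n)+|A|$ rather than a looser constant.
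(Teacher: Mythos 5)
Your proposal is correct and is essentially identical to the paper's own proof: the paper likewise combines Lemma~\ref{lem:devent} with a union bound splitting the threshold into $|A|$ shares of $\delta/|A|$, applies Proposition~\ref{prop:borned} coordinatewise, and absorbs the ceiling via $\lceil x\rceil\leq x+1$ to get the prefactor $\delta\log(n)+|A|$. Nothing is missing; the bookkeeping you flag is exactly what the paper does.
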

\begin{proof}
By combining  Lemma \ref{lem:devent} and Proposition~\ref{prop:borned}, we get

\begin{eqnarray*}
\P\left[N_{n}(w)D\left(\ph_n\left(\cdot | w\right) ; p\left(\cdot | w\right)\right) > \delta\right] 
& \leq & \P\left[\sum_{b\in A} N_{n}d\left(\ph_n\left(b | w\right) ; p\left(b | w\right)\right) > \delta\right]  \\
& \leq & \sum_{b\in A}\P\left[ N_{n}d\left(\ph_n\left(b | w\right) ; p\left(b | w\right)\right) > \frac{\delta}{|A|}\right]  \\
&\leq & 2|A|e\left\lceil \frac{\delta}{|A|}\log(n) \right\rceil \exp\left(-\frac{\delta}{|A|}\right)\\
&\leq & 2|A|e\left(\frac{\delta}{|A|}\log(n) +1\right) \exp\left(-\frac{\delta}{|A|}\right)\\\
&=&2e\left( \delta\log(n)+|A| \right)\exp \left( -\frac{\delta}{|A|} \right).
\end{eqnarray*}
\end{proof}

\begin{rem}\label{rem:borneDcond}
It follows from Remark \ref{rem:bornedcond} that the following variant of Theorem \ref{th:borneD} holds:
$$\P\left[N_{n}(w)D\left(\ph_n\left(\cdot | w\right) ; p\left(\cdot | w\right)\right) > \delta\, |\, N_{n}(w)>0\right] \;\leq\; 2e\left( \delta\log(n)+|A|-1 \right)\exp \left( -\frac{\delta}{|A|-1} \right).$$
\end{rem}

\section*{Appendix B: Exponential inequalities for weak dependent processes}
\label{sec:exp}
\renewcommand{\thesection}{B}

In this section we state some results providing  exponential inequalities  for processes satisfying Assumption~\ref{ass:phi} and prove two lemmas that are useful in the proof of Theorem~\ref{th:underestimation}. 
The first result is a version of Theorem~3.1 in \cite{galves2008} that we state under weaker conditions, given by Assumption~\ref{ass:phi}. 

\begin{prop}\label{theo:galves}
Assume the process $\{X_t\colon t\in\Z\}$ satisfies Assumption~\ref{ass:phi}. Then for any 
$w\in A^*$, any $a\in A$ and any $t>0$ the
following inequality holds
\begin{equation*}\label{Nn2}
\P(\,|N_n(w,a)-np(wa)|\,>\,t\,)\,\leq \,e^{\alpha_0/8e^2(|A|\beta+2\alpha_0)} 
\exp \Bigl(\,\frac{-t^2}{|wa|n}\,\Bigr)\,.
\end{equation*}
\end{prop}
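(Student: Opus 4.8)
The plan is to treat $N_n(w,a)=\sum_{t=1}^n \1\{X_{t-|w|}^{t-1}=w,\,X_t=a\}$ as a sum of indicators of the block $wa$ (of length $|wa|$) and to prove a Hoeffding-type concentration inequality for this additive functional of the weakly dependent chain $\{X_t\}$. Since $\E[N_n(w,a)]=np(wa)$ by stationarity, the statement is exactly a two-sided deviation bound for a bounded functional whose value changes by at most $|wa|$ when a single coordinate is modified. Were the $X_t$ independent, a direct bounded-differences argument would give the result at once, and the exponent $-t^2/(|wa|n)$ already reflects that each coordinate influences at most $|wa|$ of the block indicators; the whole difficulty lies in the dependence.

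To control the dependence I would use the filtration $\F_t=\sigma(X_j:j\le t)$ and write, with $Y_t=\1\{X_{t-|w|}^{t-1}=w,X_t=a\}$,
\[
N_n(w,a)-np(wa)\;=\;\sum_{t=1}^n\bigl(Y_t-\E[Y_t\,|\,\F_{t-1}]\bigr)\;+\;\sum_{t=1}^n\bigl(\E[Y_t\,|\,\F_{t-1}]-p(wa)\bigr).
\]
The first sum is a martingale with increments bounded by $1$ and is handled by an Azuma--Hoeffding inequality (after separating the residue classes modulo $|wa|$, on which consecutive blocks occupy disjoint coordinate windows, one recovers the factor $|wa|$ in the denominator). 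The second, predictable sum is the genuine obstacle: it measures how far the conditional law of the next symbol given the realized past is from the stationary one, and it does not telescope. I would bound it through a coupling of the chain with its stationary version, of the type constructed for chains of type A, in which two copies started from different pasts coalesce with a per-step probability bounded below in terms of the non-nullness coefficient $\alpha_0$, while the summable continuity rate ensures that the disagreement before coalescence is geometrically negligible.

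Concretely, I expect the coupling to supply summable ``coupling coefficients'' whose total mass is controlled by $\alpha_0$ and $\beta=\sum_k\beta_k$; feeding these into a Gaussian-concentration (transportation, or bounded-differences-with-dependence) inequality is what produces the prefactor $e^{\alpha_0/8e^2(|A|\beta+2\alpha_0)}$ and the exponent $\exp(-t^2/(|wa|n))$. The bookkeeping is routine once the coupling is in place; the key quantitative input is that, under Assumption~\ref{ass:phi}, $\alpha_0>0$ keeps the coalescence probability bounded away from $0$ while $\beta<\infty$ makes the approximation error summable. This is precisely what lets the uniformly-bounded-below transition hypothesis of Theorem~3.1 in \cite{galves2008} be dropped.

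The main obstacle is therefore the construction and tail analysis of the coupling under the weaker Assumption~\ref{ass:phi}: one must verify that weak non-nullness ($\alpha_0>0$) together with a \emph{merely} summable continuity rate still forces coalescence with a summable tail, even though here, unlike in \cite{galves2008}, the individual transition probabilities are not assumed uniformly bounded below. I would isolate this coalescence estimate as the central lemma and then derive the stated inequality as its corollary, obtaining in passing the two auxiliary lemmas invoked in the proof of Theorem~\ref{th:underestimation}.
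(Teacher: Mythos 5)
Your plan never actually proves anything: every step with quantitative content is deferred. The prefactor $e^{\alpha_0/8e^2(|A|\beta+2\alpha_0)}$ and the exponent $-t^2/(|wa|n)$ are said to come from ``feeding summable coupling coefficients into a Gaussian-concentration inequality,'' but neither the coupling estimate nor that concentration inequality is stated or derived; you explicitly leave the ``coalescence estimate'' as an unproven central lemma, which is precisely the whole content of the proposition. Moreover, the route you sketch has a step that would fail as described. In your decomposition, the compensator is $\E[Y_t\given\F_{t-1}]=\1\{X_{t-|w|}^{t-1}=w\}\,\P(X_t=a\given X_{-\infty}^{t-1})$, so the predictable sum is, up to continuity-rate corrections of size $\beta$, equal to $p(a|w)\bigl(N_n(w)-np(w)\bigr)$: it is an additive functional of exactly the same kind as the quantity you started with, so the martingale-plus-compensator split gains nothing --- the Azuma part is the easy half and the hard half is untouched. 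Your treatment of that hard half via ``per-step coalescence with probability bounded below by $\alpha_0$'' is also too coarse for chains of infinite memory: two copies that produce the same symbol at one step do not merge, since their conditional laws still depend on their entire (different) pasts; keeping them together forever costs factors of order $1-|A|\beta_k$ at each subsequent step, and making this precise is the Comets--Fern\'andez--Ferrari construction \cite{comets2002} combined with a Dedecker--Prieur-type concentration inequality \cite{DP} applied directly to the whole functional $N_n(w,a)$ --- i.e., the full machinery of the literature, which you would have to reproduce and did not.

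The paper's own proof is of a completely different and much shorter nature: it is a reduction, not a construction. Theorem~3.1 of \cite{galves2008} already gives exactly this inequality under the stronger hypothesis $\inf_{w\in T_0}p(a|w)>0$ for all $a$; the paper observes (i) that the proof there rests only on results of \cite{comets2002} and \cite{DP} which remain valid under the weaker Assumption~\ref{ass:phi}, and (ii) that the coefficient $\alpha=\sum_{k\geq 0}(1-\alpha_k)$ appearing in that bound can be translated into the present hypotheses through the inequality $1-\alpha_k\leq |A|\beta_k$ (noted by \cite{csis-tal2010}), which is how the constant involving $|A|\beta$ and $\alpha_0$ in the statement arises. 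Your proposal misses the availability of this reduction entirely, and what it offers in its place is a program, not a proof.
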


\begin{proof}
Theorem~3.1 in \cite{galves2008} was proven for a process satisfying a stronger non-nullness hypothesis than our
Assumption~\ref{ass:phi}, namely that $\inf_{w\in T_0} \{p(a|w)\} > 0$  for any $a\in A$. But the proof of the
theorem is based on results obtained in \cite{comets2002} and \cite{DP} that also hold for
processes satisfying our weaker assumption. Moreover, the upper bound in Theorem~3.1 in \cite{galves2008} 
depends on the coefficient 
\[
\alpha:=\sum_{k\geq 0} (1-\alpha_k)\,,
\]
where for $k\geq 1$
\[
\alpha_k := \inf_{u\in A^k}\;\sum_{a\in A}\; \inf_{x_{-\infty}^{-1}} p(a|x_{-\infty}^{-1}u)\,.
\]
But it can be shown that for any $k\geq 1$ we have $1-\alpha_k\leq |A|\beta_k$, as noted by \cite{csis-tal2010} in their proof of Lemma~3.  Therefore $\alpha \leq |A| \beta + \alpha_0$
and Theorem~3.1 in \cite{galves2008} takes the form of Proposition~\ref{theo:galves}.
\end{proof}
As a consequence of this result we have the following lemma, proven in  \cite[Corollary~A.7]{leonardi2009}. 

\begin{lemma}\label{lemma:leo}
Assume the process $\{X_t\colon t\in\Z\}$ satisfies Assumption~\ref{ass:phi}. Then for any 
$w\in A^*$, any $a\in A$ and any $t>0$ the
following inequality holds
\[
\P\bigl(|\hat{p}_n(a|w)-p(a|w)|\,>\,t\bigl)
\,\leq\,e^{\alpha_0/32e^2|A|^2(|A|\beta+2\alpha_0)}(|A|+1)\exp \Bigl(  
\frac{-nt^2p(w)^2}{|w|+1}\Bigl)\,.
\]
\end{lemma}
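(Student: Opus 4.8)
The aim is to transfer the concentration of the \emph{counts} $N_n(w,a)$ and $N_n(w)$ provided by Proposition~\ref{theo:galves} to the \emph{ratio} $\hat{p}_n(a|w)=N_n(w,a)/N_n(w)$, following the scheme of \cite[Corollary~A.7]{leonardi2009}; the only new ingredient is that one now feeds in the weak-hypothesis deviation bound of Proposition~\ref{theo:galves} instead of the corresponding bound of \cite{galves2008}. We may assume $t\le 1$, since otherwise $|\hat{p}_n(a|w)-p(a|w)|\le 1<t$ and the left-hand side vanishes, and we may assume $p(w)>0$, since otherwise $w$ never occurs, the exponent is $0$, and the bound is trivial.

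First I would linearise the ratio around its target. Writing $M=N_n(w,a)$, $N=N_n(w)$ and $p=p(a|w)$, and using the factorisation $np(wa)=np(w)\,p(a|w)$, one has the exact identity
\[
M-pN \;=\; \bigl[\,N_n(w,a)-np(wa)\,\bigr]\;-\;p\,\bigl[\,N_n(w)-np(w)\,\bigr].
\]
On the event $\{|\hat{p}_n(a|w)-p|>t\}\cap\{N\ge 1\}$ one has $|M-pN|>tN$, and since $N=np(w)+[N_n(w)-np(w)]$, a short computation shows that if both centred counts had absolute value at most $\theta:=tnp(w)/3$, then $p\le 1$ and $t\le 1$ would give $|M-pN|\le 2\theta\le t\,(np(w)-\theta)\le tN$, a contradiction. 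Hence
\[
\{|\hat{p}_n(a|w)-p|>t\}\;\subseteq\;\{\,|N_n(w,a)-np(wa)|>\theta\,\}\;\cup\;\{\,|N_n(w)-np(w)|>\theta\,\},
\]
where the right-hand event also absorbs the degenerate case $N=0$ (where $\hat{p}_n(a|w)=1/|A|$ by convention), since $N_n(w)=0$ is itself a deviation of size $np(w)>\theta$ of $N_n(w)$ from its mean.

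Each of the two events is then controlled by Proposition~\ref{theo:galves}. The first is immediate, applying the proposition to the word $wa$ of length $|w|+1$. For the second I would write $N_n(w)=\sum_{b\in A}N_n(w,b)$, so that $\{|N_n(w)-np(w)|>\theta\}\subseteq\bigcup_{b\in A}\{|N_n(w,b)-np(wb)|>\theta/|A|\}$, and apply Proposition~\ref{theo:galves} to each of these $|A|$ words (again of length $|w|+1$). A union bound over the resulting $|A|+1$ events produces the multiplicative factor $|A|+1$, while every exponent has the form $-s^2/((|w|+1)\,n)$ with $s$ proportional to $tnp(w)$; collecting the terms yields a bound of the form $C\,(|A|+1)\exp\bigl(-c\,n t^2 p(w)^2/(|w|+1)\bigr)$.

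The genuinely delicate point, and the only real obstacle beyond routine algebra, is the random denominator: no scalar concentration inequality applies to $\hat{p}_n(a|w)$ directly, and one must jointly control the fluctuation of the numerator and the event that $N_n(w)$ is atypically small. The sharp values of the constants $C$ and $c$ — yielding in particular the prefactor $e^{\alpha_0/32e^2|A|^2(|A|\beta+2\alpha_0)}$ and the exponent $n t^2 p(w)^2/(|w|+1)$ of the statement — are those of \cite[Corollary~A.7]{leonardi2009}, now with the weak-hypothesis constant of Proposition~\ref{theo:galves} inserted in place of the one of \cite{galves2008}; tracking them requires care but involves no idea beyond the union bound above.
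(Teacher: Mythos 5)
Your overall strategy---reducing the deviation of the ratio $\hat p_n(a|w)=N_n(w,a)/N_n(w)$ to deviations of the counts via an event inclusion and a union bound over $|A|+1$ applications of Proposition~\ref{theo:galves}---is sound and is indeed the intended route: the paper itself contains no proof of this lemma (it cites \cite[Corollary~A.7]{leonardi2009}, observing only that the argument survives under the weaker Assumption~\ref{ass:phi} because it rests on Proposition~\ref{theo:galves}). Your inclusion argument is correct as stated: the identity $M-pN=[M-np(wa)]-p[N-np(w)]$, the threshold $\theta=tnp(w)/3$, the verification that $t\le 1$ gives $|M-pN|\le 2\theta\le t(np(w)-\theta)\le tN$, and the absorption of $\{N_n(w)=0\}$ into the denominator event all check out.

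The gap is your final paragraph. Your scheme does not deliver the stated inequality, and the shortfall sits in the exponential \emph{rate}, not in the prefactor, so it cannot be removed by ``tracking constants with care.'' Feeding the thresholds $\theta$ (numerator) and $\theta/|A|$ (each $N_n(w,b)$) into Proposition~\ref{theo:galves} yields
\[
\P\bigl(|\hat p_n(a|w)-p(a|w)|>t\bigr)\;\le\;(|A|+1)\,e^{\alpha_0/8e^2(|A|\beta+2\alpha_0)}\exp\Bigl(-\frac{nt^2p(w)^2}{9|A|^2(|w|+1)}\Bigr)\,,
\]
whose exponent is a factor $9|A|^2$ smaller than the stated $nt^2p(w)^2/(|w|+1)$; for large $n$ the claimed bound is strictly stronger than what you prove, and no bounded prefactor can compensate an exponential discrepancy. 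Moreover the repair is not routine within your scheme: the symmetric $1/3$ split forces the factor $9$, while the asymmetric split (threshold $tnp(w)/(2|A|)$ on each $N_n(w,b)$, hence $tnp(w)/2$ on the denominator), which would reproduce the paper's constant pattern (the $8$ of Proposition~\ref{theo:galves} becoming $32|A|^2$ in the lemma), fails to produce a contradiction when $p(a|w)$ is large and $t>1-p(a|w)$, so it requires a separate case analysis you do not provide. For fairness, note that the paper's Appendix~B is itself inconsistent on exactly this point if read literally: its own derivation of Lemma~\ref{lemma:Nn} from Proposition~\ref{theo:galves} exhibits the same $|A|^2$ mismatch between proof and statement, and the defined-but-unused macro $e^{1/e}$ strongly suggests the mixing constants such as $\alpha_0/32e^2|A|^2(|A|\beta+2\alpha_0)$ were meant to multiply the exponents rather than appear in the prefactors. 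Under that corrected reading your approach is structurally the right one, but it still establishes the rate constant $\alpha_0/72e^2|A|^2(|A|\beta+2\alpha_0)$ where $\alpha_0/32e^2|A|^2(|A|\beta+2\alpha_0)$ is claimed, so the lemma as stated remains unproved by your argument.
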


Now, we prove Lemmas \ref{lemma:Nn} and \ref{lemma:div} below. These  two results are useful in the proof of Theorem~\ref{th:underestimation}.

\begin{lemma}\label{lemma:Nn}
Assume the process $\{X_t\colon t\in\Z\}$ satisfies Assumption~\ref{ass:phi}. Then for any 
$w\in A^*$ and any $t>0$ such that $t < np(w)$  we have
\begin{equation}\label{bound:Nn}
\P(\,N_n(w) \,\leq\, t\,)\,\leq \,e^{\alpha_0/8e^2|A|^2(|A|\beta+2\alpha_0)} |A|
\exp \Bigl(\frac{- n\, [p(w)-\frac{t}{n}]^2}{|w|+1}\Bigr)\,.
\end{equation}
\end{lemma}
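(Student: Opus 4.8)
The plan is to turn the lower-deviation event $\{N_n(w)\le t\}$ into a single word-occurrence deviation and then apply Proposition~\ref{theo:galves}. Set $s = np(w)-t$, which is strictly positive by the hypothesis $t<np(w)$. Since $X_t\in A$ for every $t$, one has the exact identity $N_n(w)=\sum_{a\in A}N_n(w,a)$ and correspondingly $p(w)=\sum_{a\in A}p(wa)$, so that $\{N_n(w)\le t\}=\{np(w)-N_n(w)\ge s\}\subseteq\{|N_n(w)-np(w)|\ge s\}$. The important point is to retain the \emph{full} gap $s$ here rather than distributing it among the $|A|$ letters.

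First I would apply the concentration bound to $N_n(w)$ itself, viewed as the occurrence count of the single word $w$. Proposition~\ref{theo:galves} is stated for $N_n(w,a)$, i.e.\ for the count of the length-$(|w|+1)$ word $wa$; the same statement holds for the count of any fixed word, the relevant length being the word length, because the underlying martingale argument (as in Proposition~\ref{prop:borned}) only uses that the indicator $\1\{X_{t-|w|}^{t-1}=w\}$ of an occurrence is measurable with respect to the past $\F_{t-1}$. Reading it for the word $w$ (or, equivalently, writing $w=vb$ and applying the proposition to $N_n(v,b)$ up to a single boundary term) gives
\[
\P\bigl(|N_n(w)-np(w)|> s\bigr)\;\le\; e^{\alpha_0/8e^2(|A|\beta+2\alpha_0)}\exp\Bigl(\frac{-s^2}{|w|\,n}\Bigr)\,.
\]
Substituting $s=n[p(w)-t/n]$ turns the exponent into $-n[p(w)-t/n]^2/|w|$, and since $|w|\le |w|+1$ this is at most $-n[p(w)-t/n]^2/(|w|+1)$, which is the denominator claimed in \eqref{bound:Nn}.

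It then remains to reconcile the prefactor. Using $\alpha_0=\sum_a\inf_{w\in T_0}p(a|w)\le\inf_w\sum_a p(a|w)=1$ and $|A|\ge 2$, one checks that $\frac{\alpha_0}{8e^2(|A|\beta+2\alpha_0)}\bigl(1-|A|^{-2}\bigr)\le \frac{1}{16e^2}\le\log|A|$, hence $e^{\alpha_0/8e^2(|A|\beta+2\alpha_0)}\le |A|\,e^{\alpha_0/8e^2|A|^2(|A|\beta+2\alpha_0)}$; this upgrades the constant to the uniform form of the lemma and produces the factor $|A|$. I expect the main obstacle to be precisely this matching of the exponent: a naive union bound over $a\in A$ applied to $\sum_a(np(wa)-N_n(w,a))\ge s$ forces a per-letter gap of only $s/|A|$ and thereby loses a factor $|A|^2$ in the denominator of the exponent. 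Avoiding that loss is what dictates treating $N_n(w)$ as one word count of length $|w|$, the only mild technical cost being the boundary term in the identification $N_n(w)\leftrightarrow N_n(v,b)$.
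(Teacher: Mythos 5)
Your proof is correct, and it takes a genuinely different route from the paper's --- one that is in fact sharper. The paper's own proof writes $N_n(w)=\sum_{a\in A}N_n(w,a)$, observes that on $\{N_n(w)\leq t\}$ at least one letter must satisfy $|N_n(w,a)-np(wa)|\geq (np(w)-t)/|A|$, and applies Proposition~\ref{theo:galves} once per letter. That is precisely the union-bound route you warn against: the per-letter gap produces the exponent $-[np(w)-t]^2/(|A|^2(|w|+1)n)$, i.e.\ the $|A|^2$ ends up inside the exponential. This is \emph{not} the bound stated in \eqref{bound:Nn}, where the $|A|^2$ sits instead in the denominator of the prefactor's exponent: writing $C=e^{\alpha_0/(8e^2(|A|\beta+2\alpha_0))}$ and $y=n[p(w)-t/n]^2/(|w|+1)$, the proof's bound $C|A|e^{-y/|A|^2}$ exceeds the claimed bound $C^{1/|A|^2}|A|e^{-y}$ by the factor $(Ce^{y})^{1-1/|A|^2}>1$, so the paper's closing step (``this implies the bound'') conceals a mismatch. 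Your approach --- treating $N_n(w)$ as the occurrence count of the single word $w$ and keeping the whole gap $s=np(w)-t$ --- yields the exponent $-s^2/(|w|n)$, stronger than required, and your prefactor comparison (from $\alpha_0\leq 1$, so that $(1-|A|^{-2})\,\alpha_0/(8e^2(|A|\beta+2\alpha_0))\leq 1/(16e^2)\leq \log|A|$ for $|A|\geq 2$) is correct. So your argument is the one that actually establishes Lemma~\ref{lemma:Nn} as stated.

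Two details in your write-up need repair, neither fatal. First, your justification for applying Proposition~\ref{theo:galves} to an arbitrary word is misattributed: that proposition is not proved by the martingale method of Proposition~\ref{prop:borned} (Appendix~A), but by the weak-dependence inequalities of Appendix~B, so you cannot argue that ``the underlying martingale argument only uses measurability''. Fortunately the parenthetical version of your step needs no re-proof at all: writing $w=vb$ with $v\in A^{*}$ and $b\in A$, Proposition~\ref{theo:galves} applies verbatim to $N_n(v,b)$, whose word length is $|vb|=|w|$. Second, the boundary term: pathwise $|N_n(w)-N_n(v,b)|\leq 1$, and replacing $s$ by $s-1$ in the exponent is \emph{not} absorbed by the relaxation $|w|\to|w|+1$ in general (take $s$ small and $|w|$ large). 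The clean fix is stationarity: $N_n(w)$ counts occurrences of $w$ ending at times $0,\dots,n-1$, while $N_n(v,b)$ counts those ending at times $1,\dots,n$, so the two counts have the same distribution and $\P(|N_n(w)-np(w)|>s)=\P(|N_n(v,b)-np(vb)|>s)$ exactly, with no loss.
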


\begin{proof}
 Using that $N_n(w)=\sum_{a\in A} N_n(w,a)$, $p(w)=\sum_{a\in A} p(wa)$ and $t-np(w)<0$ we have that 
  \begin{align*}
    \P(N_n(w) \,\leq\, t) \, &= \, \P\Bigl( \;\sum_{a\in A}[N_n(w,a) - 
    np(wa)]\, \leq\, t - np(w)\Bigr)\\
    \, &\leq \, \sum_{a\in A} \,\P( | N_n(w,a) - np(wa)| 
    \,\geq\, \frac{np(w) - t }{|A|})
  \end{align*}
  Using Theorem~\ref{theo:galves} we can bound above the right hand side of
  the last inequality by
  \[
  e^{\alpha_0/8e^2(|A|\beta+2\alpha_0)}\, |A|\, \exp \bigl[- \,\frac{[np(w)-t]^2}{|A|^2(|w|+1)n}\bigr].
  \]
This implies the bound in (\ref{bound:Nn}).
\end{proof}

\begin{lemma}\label{lemma:div}
Assume the process $\{X_t\colon t\in\Z\}$ satisfies Assumption~\ref{ass:phi}. Let $u,w\in A^*$ and $b\in A$ such that 
$p(b|u)- p(b|w)> 0$. Then,  for any $t< [p(b|u)- p(b|w)]^2/8 $ we have that 
\[
\P\bigl(D(\hat p_n(\cdot|u) ; \hat p_n(\cdot|w) ) \leq t\bigl)
\,\leq\,2 e^{\alpha_0/32e^2|A|^2(|A|\beta+2\alpha_0)} (|A|+1)\exp\Bigl[ -n\frac{t}{2}\min\Bigl( \frac{p(w)^2}{|w|+1}, \frac{p(u)^2}{|u|+1}  \Bigr)   \Bigr]\,.
\]
\end{lemma}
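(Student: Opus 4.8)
The plan is to reduce the full-alphabet empirical divergence to the single coordinate $b$ on which the true transitions are known to differ, and then to invoke the concentration estimate of Lemma~\ref{lemma:leo}. Write $\gamma := p(b|u) - p(b|w) > 0$. Coarse-graining $A$ to the two cells $\{b\}$ and $A\setminus\{b\}$, the data-processing inequality (or, equivalently, Pinsker's inequality $D(P;Q)\geq \tfrac12\|P-Q\|_1^2$ together with the elementary bound $\|P-Q\|_1 \geq 2|P(b)-Q(b)|$) yields the lower bound
\[
D(\hat p_n(\cdot|u)\,;\,\hat p_n(\cdot|w)) \;\geq\; 2\bigl(\hat p_n(b|u) - \hat p_n(b|w)\bigr)^2 .
\]
Consequently, on the event $\{D(\hat p_n(\cdot|u)\,;\,\hat p_n(\cdot|w)) \leq t\}$ we necessarily have $|\hat p_n(b|u) - \hat p_n(b|w)| \leq \sqrt{t/2}$.

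Next I would transfer this to deviations of the empirical transitions from the true ones by the triangle inequality. Decomposing
\[
\gamma = \bigl[p(b|u) - \hat p_n(b|u)\bigr] + \bigl[\hat p_n(b|u) - \hat p_n(b|w)\bigr] + \bigl[\hat p_n(b|w) - p(b|w)\bigr],
\]
and using the hypothesis $t < \gamma^2/8$, which forces $\sqrt{t/2} < \gamma/4$, I would deduce that on the event above
\[
|p(b|u) - \hat p_n(b|u)| + |p(b|w) - \hat p_n(b|w)| \;\geq\; \tfrac{3}{4}\gamma ,
\]
so at least one of the two deviations is at least $\tfrac{3}{8}\gamma$. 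This gives the inclusion
\[
\{\,D \leq t\,\} \;\subseteq\; \bigl\{|\hat p_n(b|u)-p(b|u)| \geq \tfrac{3}{8}\gamma\bigr\} \cup \bigl\{|\hat p_n(b|w)-p(b|w)| \geq \tfrac{3}{8}\gamma\bigr\}.
\]

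Then I would apply Lemma~\ref{lemma:leo} to each of the two events (with its $t$ taken equal to $\tfrac{3}{8}\gamma$) and a union bound, which produces the factor $2(|A|+1)$ and the prefactor $e^{\alpha_0/32e^2|A|^2(|A|\beta+2\alpha_0)}$. The resulting exponents are $-n(\tfrac{3}{8}\gamma)^2 p(u)^2/(|u|+1)$ and $-n(\tfrac{3}{8}\gamma)^2 p(w)^2/(|w|+1)$; since $(\tfrac38\gamma)^2 = \tfrac{9}{64}\gamma^2$ and $t<\gamma^2/8$ imply $\tfrac{9}{64}\gamma^2 > \tfrac{t}{2}$, each exponent is dominated by $-n\tfrac{t}{2}\min\!\bigl(p(u)^2/(|u|+1),\, p(w)^2/(|w|+1)\bigr)$, and bounding the sum of the two exponentials by twice the larger yields exactly the stated inequality.

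The arithmetic comparing $\tfrac{9}{64}\gamma^2$ with $\tfrac{t}{2}$ and the Pinsker step are routine. The one genuinely delicate point is the first reduction: the event of small \emph{full-alphabet} empirical divergence must be shown to force a large deviation of a \emph{single} empirical transition probability, since Lemma~\ref{lemma:leo} controls one $\hat p_n(a|\cdot)$ at a time. The hard part will therefore be the lower bound $D \geq 2(\hat p_n(b|u) - \hat p_n(b|w))^2$, which is precisely where coarse-graining $A$ to a binary alphabet makes the one-coordinate concentration bound applicable.
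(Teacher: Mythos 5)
Your proof is correct and follows essentially the same route as the paper's: a Pinsker-type reduction of the empirical divergence to the single coordinate $b$, a triangle-inequality argument showing that $\{D\leq t\}$ with $t<\gamma^2/8$ forces one of the two empirical transition probabilities to deviate from its true value by a constant multiple of $\gamma$, and then a union bound with Lemma~\ref{lemma:leo}. The only differences are cosmetic: you phrase the argument contrapositively rather than via a disjoint ``good event'' $C_{n,\nu}^{b,w,u}$, and your coarse-graining gives the sharper constant $2(\hat p_n(b|u)-\hat p_n(b|w))^2$ where the paper uses $\tfrac12(\hat p_n(b|u)-\hat p_n(b|w))^2$, neither of which changes the final bound.
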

\begin{proof}
By Pinsker's inequality  (see, e.g, \cite[SectionA.2]{lugosi-book} for a proof) we have that 
\begin{align}\label{pins}
D(\hat p_n(\cdot|u) ; \hat p_n(\cdot|w) )\;&\geq \; \frac12 \Bigl[\,\sum_{a\in A}
|\hat p_n(a|u)  - \hat p_n(a|w)|\,\Bigr]^2\notag\\
&\ge\;\frac12 \bigl(\hat p_n(b|u)  - \hat p_n(b|w)\bigr)^2\,.
\end{align}
Now, set $\nu = \frac18[p(b|u)-p(b|w)]^2$ and define the events 
\begin{equation}\label{cn}
C_{n,\nu}^{b,w,u} = \{X_1^n\colon  |\hat p_n(b|u) - p(b|u)| \leq \sqrt{\nu/2}\}\,\cap \,\{X_1^n\colon  |\hat p_n(b|w) - p(b|w)| \leq \sqrt{\nu/2}\}\,.
\end{equation}
 Then, if $t< \nu$
 we have that the event
\begin{equation*}
\{X_1^n\colon D(\hat p_n(\cdot|u) ; \hat p_n(\cdot|w) ) \leq t \} \cap C_{n,\nu}^{b,w,u} = \emptyset\,.
\end{equation*}
To see this note that by (\ref{pins}), if (\ref{cn}) holds then  
\begin{align*}
D(\hat p_n(\cdot|u) ; \hat p_n(\cdot|w) )\;&\geq \;\frac12\, \Bigl[ \,(p(b|u) - \sqrt{\nu/2}) - (p(b|w)+ \sqrt{\nu/2}) \Bigr]^2 =
  \;\nu \; > \; t\,.
\end{align*}
Therefore, 
using the bounds in Lemma~\ref{lemma:leo} we obtain for any $t < \nu$ that
\begin{align*}
\P\bigl(D(\hat p_n(\cdot|u) ; \hat p_n(\cdot|w) ) \leq t\bigl)
\;&\leq\;  \P\bigl(  |\hat p_n(b|u) - p(b|u)| \geq \sqrt{\nu/2}\, \bigl) \; +\;  \P\bigl(  |\hat p_n(b|w) - p(b|w)| \geq\sqrt{\nu/2} \,\bigl)\\
&\leq\; 2 e^{\alpha_0/32e^2|A|^2(|A|\beta+2\alpha_0)}(|A|+1) \exp\Bigl[ -n\,\frac{\nu}{2}\min\Bigl( \frac{p(w)^2}{|w|+1},\frac{p(u)^2}{|u|+1}  \Bigr)   \Bigr]\,.
\end{align*}
\end{proof}

\section*{Acknowledgments}

This work was supported by Fapesp (grant 2009/09411-8) and USP-COFECUB (grant 2009.1.820.45.8).
F.L. is partially supported by a CNPq fellowship (grant 302162/2009-7).

\bibliographystyle{plain}
\bibliography{references}
\end{document}